\newtheorem{thm}{Theorem}[section]
\newtheorem{lem}[thm]{Lemma}
\newtheorem{prop}[thm]{Proposition}
\theoremstyle{definition}
\newtheorem{defn}[thm]{Definition}
\newtheorem{rem}[thm]{Remark}
\numberwithin{equation}{section}
\newcommand{\trans}[1]{{}^t\kern-.2em{#1}}
\newcommand{\ytrans}[1]{{}^t\kern-.11em{#1}}
\newcommand{\Trans}[1]{{}^T\kern-.2em{#1}}
\newcommand{\lsup}[2]{{}^{#1}\kern-.1em{#2}}
\newcommand{\M}{\mathbf{\M}}
\renewcommand{\tilde}[1]{\widetilde{#1}}
\DeclareFixedFont{\bgn}{OT1}{cmr}{m}{n}{20.74}
\DeclareFixedFont{\bgi}{OT1}{cmr}{m}{it}{20.74}
\newcommand{\bigzerou}{\smash{\lower1.7ex\hbox{\bgi O}}}
\def\eqnarray{%
\stepcounter{equation}%
\def\@currentlabel{\p@equation\theequation}%
\global\@eqnswtrue
\m@th
\global\@eqcnt\z@
\tabskip\@centering
\let\\\@eqncr
$$\everycr{}\halign to\displaywidth\bgroup
\hskip\@centering$\displaystyle\tabskip\z@skip{##}$\@eqnsel
&\global\@eqcnt\@ne \hfil$\displaystyle{{}##{}}$\hfil
&\global\@eqcnt\tw@ $\displaystyle{##}$\hfil\tabskip\@centering
&\global\@eqcnt\thr@@ \hb@xt@\z@\bgroup\hss##\egroup
\tabskip\z@skip
\cr}
\def\varin{\mathrel{\mathpalette\@varin\relax}}
\def\@varin#1{%
\hbox{\setbox\z@\hbox{\m@th$#1\cup$}%
\def\reserved@a{bold}%
\dimen@\ifx\reserved@a\math@version .3\else .2\fi\p@
\kern.5\wd\z@\kern-\dimen@
\vrule\@width2\dimen@\@height1.08\ht\z@\@depth\z@
\kern-\dimen@\kern-.5\wd\z@
\box\z@}}
\def\ampm{\,pm}%
\def\ampm{\,am}%
\xdef\daytime{%
\ifnum\count2<10 0\fi \the\count2:%
\ifnum\count1<10 0\fi \the\count1
\ampm
}%
\xdef\Daytime{
\the\count2:
\ifnum\count1<10 0\fi \the\count1 
\ampm 
}
\begin{document}

\noindent
\thanks{{\bf keywords:} {\it 
    Non-holonomic sub-bundle, 
contact structure, principal bundle, horizontal subspace}
\vspace{1mm}\\ \hspace{2ex}
{\bf MSC 2010}: 53C17, 57S25}

\title[Gromoll-Meyer exotic sphere]
{A co-dimension $3$ sub-Riemannian structure on Gromoll-Meyer exotic sphere}

\author{Wolfram Bauer, Kenro Furutani and Chisato Iwasaki}
\thanks{
The second named author was supported by the Grant-in-aid for Scientific
Research (C) No. 26400124, JSPS and the
National Center for Theoretical Science, National Taiwan University, Taiwan.
The third named author was supported by 
the National Center for Theoretical Science, National Taiwan University, Taiwan.}

\address{Wolfram Bauer (corresponding author)\endgraf
Institut f\"{u}r Analysis, Leibniz Universit\"{a}t  \endgraf
Welfengarten 1, 30167 Hannover, Germany \endgraf
mobil: +49 (0)177 - 6115003 \endgraf
office: +49 (0)511 -  762 2361\endgraf
}
\email{bauer@math.uni-hannover.de}

\address{Kenro Furutani\endgraf
Department of Mathematics\endgraf
Tokyo University of Science\endgraf
2641 Yamazaki\endgraf 
Noda, Chiba, Japan (278-8510) \endgraf } 
\email{furutani$\_$kenro@ma.noda.tus.ac.jp}

\address{Chisato Iwasaki\endgraf
Department of Mathematics \endgraf 
School of Science \endgraf 
University of Hyogo \endgraf 
2167 Shosha Himeji, Hyogo, Japan (671-2201) \endgraf
}
\email{iwasaki@sci.u-hyogo.ac.jp}

\bigskip

\begin{abstract}
We construct a co-dimension $3$ completely non-holonomic sub-bundle on the Gromoll-Meyer exotic $7$ sphere based on 
its realization as a base space of a Sp(2)-principal bundle with the structure group Sp(1).
The same method is valid for constructing a co-dimension 3 completely non-holonomic 
sub-bundle on the standard 7 sphere (or more general on a $4n+3$ dimensional standard sphere). 
In the latter case such a construction based on the Hopf bundle is well-known. Our method provides 
an alternated simple proof for the standard sphere $\mathbb{S}^7$. 
\end{abstract}

\maketitle

\tableofcontents
\thispagestyle{empty}


\section{Introduction}

In the area of differential geometry it is a natural problem to descide whether
a ``famous manifold'' has a particular geometric structure.
In this paper we deal with a so called sub-Riemannian
structure and explicitly contruct an example of co-dimension three on one of the exotic $7$ spheres. 

A sub-Riemannian structure on a manifold $M$ is defined through a sub-bundle $\mathcal{H}$ in the tangent bundle $T(M)$ equipped  with an inner product  and 
such that evaluations of vector fields taking values in $\mathcal{H}$ together with their iterated Lie brackets span the whole tangent space at each point. Together with 
these data $M$ is referred to as a sub-Riemannian manifold. 
A sub-bundle of the above type  is called {\it completely non-holonomic} or {\it bracket generating}  and  - 
in a sense - this notion is opposite to a foliation
structure. Corresponding to the {\it Frobenius theorem}  global
connectivity on connected components by (piecewise) horizontal curves is a basic geometric property of a 
sub-Riemannian structure (see \cite{Ch}). From an analytic point of view the bracket generating property implies sub-ellipticity of corresponding second order differential operators ({\it ''Sum of Squares''}) \cite{Ho}. 
On manifolds with this structure we can define an operator, called sub-Laplacian, which reflects various geometric properties (see for examples, 
\cite{St, Mo, Er1, Er2} and references therein). So one may say that the interest in the existence of a sub-Riemannian structure on a given manifold is caused by its various geometric and analytic implications.

Contact manifolds are among the most studied sub-Riemannian structures. Recall that a contact structure is
of co-dimension one and can be described in terms of a special kind of one form. In most of the concrete cases, 
the completely non-holonomic sub-bundle admits a natural inner product which is the restriction of a given Riemannian metric. 
Some examples of this type originate from the theory of dynamical systems (see, for example \cite{Ag}).

It was proved in \cite{SH} that every Brieskorn manifold has a contact structure and a generalization to a submanifold of a non-compact K\"ahler manifold
has been given in \cite{Va}. Here the one form for defining a contact structure is given as a restriction of a one-form $\theta$ whose differential $d\theta$ is the K\"ahler form. 

In particular, a $7$-dimensional Brieskorn manifold, which is an exotic sphere and 
realized as a submanifold in the complex vector space $\mathbb{C}^{10}$,
has a contact structure.  However, it is not known whether a $7$ dimensional exotic sphere has a higher co-dimensional sub-Riemannian structure. 
We mention that the standard $7$ sphere has such a structure of
co-dimension $3$ (there are several). This has been proved in
\cite{MM} (see \cite{St}, \cite{Mo}, and \cite{BF}).

In this paper we show that
the Gromoll-Meyer exotic sphere 
has a co-dimension $3$ sub-Riemannian structure.
Our description is valid also for  
the standard case 
constructed in \cite{MM}.

So, in $\S 2$ we define a candidate
of a completely non-holonomic sub-bundle 
on the base space of a principal bundle and explain
a method consisting of 
three steps for proving that it is completely non-holonomic.

Then in $\S 3$ we apply the method to 
the standard $7$ sphere $S^{7}$ and give a simple proof of 
the existence of a co-dimension $3$ completely non-holonomic sub-bundle recovering a result in \cite{MM}. 
In $\S 4$ and $\S 5$, we show our main theorem namely that the Gromoll-Meyer exotic $7$ sphere $\Sigma_{GM}^{7}$
has a co-dimension $3$ completely non-holonomic sub-bundle.

Parts of the arguments are based on methods in linear algebra. The non-trivial part consists in selecting four local vector fields
according to the point among the candidates which generate (together with the evaluation of their brackets) 
the whole tangent space at each point in $\Sigma^{7}_{GM}$. 

\section{Principal bundles and horizontal subspaces} 

We explain a standard procedure of defining a sub-bundle in the tangent
bundle on the base space of a principal bundle by considering a possible extension of the structure
group.  By this method we obtain our ''candidate'' of a completely non-holonomic sub-bundle on the base space of the principal bundle.

\subsection{First step : Candidate of a non-holonomic sub-bundle}

Let $\pi_{G}:P\to N$ be a principal bundle with the structure group $G$.
We denote the action of $G$ on $P$ by
\[
E:P\times G\to N\cong P/G,\quad (p,g)\mapsto E_{g}(p):=p\cdot g^{-1}
\]
and assume that the action is isometric with respect to a Riemannian metric
$$(\cdot\,,\,\cdot)_{p}:T_{p}(P)\times T_{p}(P)\to \mathbb{R}.$$
Let $K\subset G$ be a closed subgroup, then the restriction of the action $E$ to the subgroup $K$ gives a principal bundle
\[
\pi_{K}:P\to P/K=:M
\]
with the structure group $K$.

We have two orthogonal decomposition of the tangent bundle $T(P)$
\[
T(P)=V^{G}\oplus H^{G}=V^{K}\oplus H^{K},~V^{K}\subset V^{G},\quad
H^K\supset H^{G},
\]
where $V^{G}$ is the sub-bundle of $T(P)$ tangent to 
the action of $G$ and $H^{G}$ is its orthogonal complement.
$V^{K}$ and $H^{K}$ are defined in the same way.

\vspace{1ex}\par 
Let $\mathfrak{g}$ and $\mathfrak{k}$ be the Lie algebra of the group $G$ and $K$, respectively. Let $A\in\mathfrak{g}$ then 
we denote by $\tilde{A}$ the vector field on $P$ defined by 
\[
\tilde{A}(f)(p)=\tilde{A}_{p}(f):=\frac{d}{dt}\left(f(p\cdot e^{tA})\right)_{|t=0}.
\]
Recall that $\tilde{A}$ is called ``fundamental vector field'' corresponding to $A\in \mathfrak{g}$. It holds
\begin{align}
&(dE_{g})_{p}(\tilde{A}_{p})=\tilde{Ad_{g}(A)}_{E_{g}(p)},\label{Ad equivariant 1}\\
&(dE_{g})_{p}(H_{p}^{G})=H_{E_{g}(p)}^{G}~\text{for}~g\in G \label{Ad equivariant 2}
\intertext{and}
&(dE_{k})_{p}(H_{p}^{G})=H_{E_{k}(p)}^{G}~\text{for}~k\in K.\label{Ad equivariant 3}
\intertext{Also we have}
&(dE_{k})_{p}(H_{p}^{K})=H_{E_{k}(p)}^{K}~\text{for}~k\in K.\label{Ad equivariant 4}
\end{align}

Then by the relation
\[
(d\pi_{G})_{p}(H_{p}^{G})=(d\pi_{G})_{E_{g}(p)}\bigr((dE_{g})_{p}(H_{p}^{G})\bigr)
=(d\pi_{G})_{E_{g}(p)}(H_{E_{g}(p)}^{G})
\]
and the assumption that $E$ acts isometrically, we can define a Riemannian metric on
$T(N)$,
which is called a {``submersion metric''}. The splitting $T(P)=V^G\oplus H^G$ gives a connection on the principal bundle
$$\pi_{G}:P\to N\cong P/G.$$
A similar result holds for the principal bundle $\pi_{K}:P\to M\cong P/K$. Moreover, the relation
\[
(dE_{k})_{p}(H_{p}^{G})=H_{E_{k}(p)}^G
\]
allows us to descend $H^G$ 
to a sub-bundle in $T(P/K)$, which we denote by 
$\mathcal{H}$.
\vspace{1ex}\par
{\it Here  our aim is to show that the sub-bundle $\mathcal{H}$ is a completely non-holonomic sub-bundle on $T(P/K)$ for particular cases.}

\subsection{Second step : Bracket calculation}

Let $X$ and $Y$ be local vector fields around a point $q\in P/K\cong
M$ taking values in $\mathcal{H}$ and 
we denote by $\tilde{X}$ and $\tilde{Y}$ their horizontal lifts
according to the connection defined by the horizontal sub-bundle
$H^K$. Then we can assume that both lifts take values in $H^{G}$:
\[
d\pi_{K}(\tilde{X})=X,~d\pi_{K}(\tilde{Y})=Y.
\]
So instead of calculating the bracket $[X,\,Y]$,
we calculate $[\tilde{X},\,\tilde{Y}]$. Then 
we have
\[
d\pi_{K}\bigr([\tilde{X},\,\tilde{Y}]\bigr)=[d\pi_{K}(\tilde{X}),\,d\pi_{K}(\tilde{Y})]
=[X,\,Y].
\]

\subsection{Third step : Total space $P$ is a compact group}

We assume that the total space $P$ itself is a Lie group with the Lie
algebra
$\mathfrak{p}$ and is equipped with an invariant
metric
$(\cdot,\,\cdot)$
under the action of the structure group $G$.

We denote the left and right multiplication by
\[
L_{a}:P\to P,~L_{a}(x)=a\cdot x~\text{and}~R_{a}:P\to
P,~R_{a}(x)=x\cdot a,\hspace{2ex} (x,a\in P), 
\]
respectively. As ususal the tangent space $T(P)$ is identified with $P\times\mathfrak{p}$ through the map 
\[
\mathfrak{p}\ni u\mapsto \tilde{u}_{p}\in T_{p}(P),
\]
where $\tilde{u}$ is a left invariant vector field with the value $u$ at the identity, i.e. 
\begin{equation*}
\tilde{u}_{p}=(dL_{p})_{Id}(u).
\end{equation*}
\par 
Equivalently we may define $\tilde{u}$ in form of a derivation as:
\begin{equation*}
\tilde{u}_{p}(f)=\tilde{u}(f)(p)
=\frac{d}{dt}\left(f(p\cdot e^{tu})\right)_{|t=0}, \hspace{3ex} \mbox{where} \hspace{3ex} f\in C^{\infty}(P).
\end{equation*}

\begin{defn}\label{one form}
Let $A\in\mathfrak{g}$ and define a one form $\theta^{A}$
on $P$ by
\[
\theta^{A}_{p}:T_{p}(P)\to \mathbb{R},\quad \theta^{A}_{p}(U)
:=(\tilde{A}_{p},\,U)_{p},
\]
\end{defn}
where $\tilde{A}$ is the fundamental vector field corresponding to $A\in\mathfrak{g}$.
So, the subspaces $H^G_{p}$ and $H^{K}_{p}$ are characterized by
\begin{align*}
&H_{p}^{G}=\{~U\in T_{p}(P)~|~\theta^{A}_{p}(U)=0~\text{for}~{^\forall}A\in\mathfrak{g}~\},\\
&H_{p}^{K}=\{~U\in T_{p}(P)~|~\theta^{A}_{p}(U)=0~\text{for}~{^\forall}A\in\mathfrak{k}~\}.
\end{align*}
\par
Let $p\in P$ and put $\mathfrak{h}_{p}=(dL_{p^{-1}})_{p}(H^{G}_{p})$. In order to prove the complete non-holonomic property of the 
sub-bundle $\mathcal{H}$ we show: 
\begin{prop}\label{step 3}
For each $q\in P/K$, there is $p\in P$ such
that $\pi_{K}(p)=q$ and
\begin{equation}\label{bracket generating property 1}
\big{\{}\tilde{A}_{Id}~|~A\in\mathfrak{k}\big{\}}
+\mathfrak{h}_{p}+[\mathfrak{h}_{p},\,\mathfrak{h}_{p}]
=\mathfrak{p}.
\end{equation}
We show this property for the standard $7$ sphere case. In case to the  Gromoll-Meyer exotic sphere we prove
\begin{equation}\label{bracket generating property 2}
Ad_{p}\bigr(\,
\{\tilde{A}_{Id}~|~A\in\mathfrak{k}\}+\mathfrak{h}_{p}+[\mathfrak{h}_{p},\,\mathfrak{h}_{p}]\,
\bigr)=\mathfrak{p}.
\end{equation}
\end{prop}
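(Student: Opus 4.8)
The two displayed identities are, logically, the \emph{same} span condition. Writing $S_p:=\{\tilde A_{Id}\mid A\in\mathfrak k\}+\mathfrak h_p+[\mathfrak h_p,\mathfrak h_p]\subset\mathfrak p$, equation \eqref{bracket generating property 1} reads $S_p=\mathfrak p$ and \eqref{bracket generating property 2} reads $Ad_p(S_p)=\mathfrak p$; since $Ad_p$ is a Lie-algebra automorphism of $\mathfrak p$ these are equivalent, the bracket term being respected because $Ad_p[\,\cdot,\cdot\,]=[Ad_p\cdot,Ad_p\cdot]$. Keeping both forms is a matter of bookkeeping: \eqref{bracket generating property 1} exposes the left-invariant trivialization of $H^G$, whereas \eqref{bracket generating property 2} exposes the right-invariant one through $Ad_p\mathfrak h_p=(dR_{p^{-1}})(H^G_p)$. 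As for the clause ``there is $p$ with $\pi_K(p)=q$'': as $p$ ranges over the fibre $\pi_K^{-1}(q)$ the subspace $S_p$ transforms by a Lie-algebra automorphism (via \eqref{Ad equivariant 3}), so the identity holds at one representative iff it holds at all of them. Hence existence of $p$ only reflects the freedom to verify the equality at a conveniently chosen point over each $q$.

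For the standard sphere I would take $P=Sp(2)$ and realize $\mathfrak p=\mathfrak{sp}(2)$ as the quaternionic anti-Hermitian matrices $\left(\begin{smallmatrix}a&b\\-\bar b&c\end{smallmatrix}\right)$ with $a,c\in\operatorname{Im}\mathbb H$ and $b\in\mathbb H$, so $\dim\mathfrak p=10$. Here the metric is bi-invariant, $\mathfrak g$ is the block-diagonal $\mathfrak{sp}(1)\oplus\mathfrak{sp}(1)$, $\mathfrak k$ one of its factors, and $\mathfrak h_p=\mathfrak g^{\perp}$ is the off-diagonal block $\{b\in\mathbb H\}$, independent of $p$. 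A one-line computation,
\[
\Big[\begin{pmatrix}0&b\\-\bar b&0\end{pmatrix},\begin{pmatrix}0&b'\\-\bar b'&0\end{pmatrix}\Big]
=\begin{pmatrix}b'\bar b-b\bar b'&0\\0&\bar b'b-\bar b b'\end{pmatrix},
\]
shows $[\mathfrak h,\mathfrak h]\subset\mathfrak g$, and since $(\mathfrak{sp}(2),\mathfrak g)$ is the irreducible symmetric pair of $S^4$ one has in fact $[\mathfrak h,\mathfrak h]=\mathfrak g$. As $\mathfrak k\subset\mathfrak g$, this gives $S_p=\mathfrak k+\mathfrak h+[\mathfrak h,\mathfrak h]=\mathfrak g\oplus\mathfrak h=\mathfrak{sp}(2)$ for \emph{every} $p$, proving \eqref{bracket generating property 1}; the three directions recovered by the brackets are precisely the $\mathfrak{sp}(1)$-factor complementary to $\mathfrak k$, matching $\operatorname{codim}\mathcal H=3$.

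For the Gromoll--Meyer sphere the structure group $K=Sp(1)$ is embedded by a twisted (two-sided, biquotient-type) action, so the passage between a fibre point and the identity introduces an explicit $Ad_p$-twist into both the vertical value $\{\tilde A_{Id}\mid A\in\mathfrak k\}$ and $\mathfrak h_p$; the left-invariant description is then $p$-dependent and unwieldy, and it is cleaner to organize the span condition after conjugation, i.e. to verify \eqref{bracket generating property 2}, where $Ad_p\mathfrak h_p=(dR_{p^{-1}})(H^G_p)$ is governed by the $G$-invariance of the metric. The plan is: (i) compute $Ad_p\{\tilde A_{Id}\mid A\in\mathfrak k\}$, $Ad_p\mathfrak h_p$ and $Ad_p[\mathfrak h_p,\mathfrak h_p]$ explicitly in quaternionic coordinates on $\mathfrak{sp}(2)$; (ii) following the remark in the introduction, select four horizontal directions spanning $\mathfrak h_p$ \emph{according to the location of $q$} and compute their pairwise brackets; (iii) verify that the images of these three families under $Ad_p$ span $\mathfrak{sp}(2)$, i.e. that the bracket term again fills the three-dimensional gap left by the vertical and horizontal spaces.

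The main obstacle is step (iii) for Gromoll--Meyer: unlike the homogeneous standard sphere, the horizontal space twists with $p$, so no single quadruple of generators is admissible over the whole manifold. One must partition $\Sigma^7_{GM}$ according to which quaternionic components of $p$ are non-degenerate and, on each piece, exhibit a concrete admissible frame whose first brackets are linearly independent modulo the vertical and horizontal directions. Establishing this non-degeneracy \emph{uniformly} over all of $\Sigma^7_{GM}$ -- rather than merely at a generic point -- is the technical heart of the argument, carried out by the linear-algebra computations of $\S4$ and $\S5$.
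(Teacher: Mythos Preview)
Your plan is correct and tracks the paper's argument closely: the paper likewise verifies \eqref{bracket generating property 1} for $S^7$ by showing $\mathfrak h_p$ is the $p$-independent off-diagonal block and that its brackets fill the diagonal, and verifies \eqref{bracket generating property 2} for $\Sigma^7_{GM}$ by a case split on $v=xw^{-1}$ (the four cases (I-a), (I-b), (I-r), (II) of \S5) with explicit bases of $Ad_p(\mathfrak h_p)$ and hand-checked linear independence of ten matrices in each case.

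The one genuine difference is your handling of the standard sphere: you invoke the irreducible symmetric pair $(\mathfrak{sp}(2),\mathfrak{sp}(1)\oplus\mathfrak{sp}(1))$ to conclude $[\mathfrak h,\mathfrak h]=\mathfrak g$ in one stroke, whereas the paper simply lists the six commutators $[u_i,u_j]$ and reads off that they span the diagonal. Your argument is cleaner and explains \emph{why} the computation must succeed; the paper's brute-force list has the advantage of making the step-$2$ claim and the exact three complementary directions visible without appealing to any classification. For the Gromoll--Meyer side your outline coincides with what the paper does, including the observation that the $K$-equivariance lets one pick a convenient $p$ (the paper normalizes $v=v_0+v_1\mathbf{i}$); the substantive work, as you say, is the case-by-case linear algebra in \S5, and there is no shortcut for it.
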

\par 
Fix a point $p\in P$ and let $\tilde{X}^{i}, \:i=0,1,\cdots$ be 
horizontal vector fields defined locally around $p$ which take values in $H^G$ and form a basis of $H_{p}^{G}$. 
Also let $v_{i}\in\mathfrak{h}_{p}$ such that $(dL_{p})_{Id}(v_{i})=\tilde{X}^{i}_{p}$, 
then $\{v_i\}$ is a basis of $\mathfrak{h}_{p}$ and 
we show 
\begin{prop}\label{difference}
\[
[\tilde{X}^{i},\,\tilde{X}^{j}]_{p}\pm (dL_{p})_{Id}\big{(}[v_{i},v_{j}]\big{)}\in H^G_{p},
\] 
for particular pairs of horizontal vector fields $\tilde{X}^{i}$ 
or some type of sums of brackets of such vector fields
\[
\sum c_{ij}^{\ell}[\tilde{X}^{i},\,\tilde{X}^{j}]_{p}\pm(dL_{p})_{Id}
\left(\sum c_{ij}^{\ell}[v_{i},v_{j}]\right)\in H^G_{p}.
\]
The sign $\pm$ will be chosen according to the cases.

Finally, we show that the vectors $\{\tilde{X}^{i}_{p}\}_{i}$
and $\{\sum c_{ij}^{\ell}[\tilde{X}^{i},\tilde{X}^{j}]_{p}\}_{\ell}$
and evaluations 
~$\{\tilde{A}_{p}~|~A\in\mathfrak{g}\}$ of fundamental vector fields span the tangent space $T_{p}(P)$. 
\end{prop}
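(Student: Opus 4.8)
\emph{Proof proposal.} The plan is to convert the bracket computation on $P$ into a Lie bracket computation in $\mathfrak{p}$ and then read off the spanning assertion from Proposition~\ref{step 3}. Fix $p\in P$. For each $i$ let $\tilde{v}_{i}$ be the left--invariant vector field on $P$ with value $v_{i}$ at the identity, so that $(\tilde{v}_{i})_{p}=(dL_{p})_{Id}(v_{i})=\tilde{X}^{i}_{p}$ and, the bracket of left--invariant fields being left--invariant, $[\tilde{v}_{i},\tilde{v}_{j}]_{p}=(dL_{p})_{Id}\bigl([v_{i},v_{j}]\bigr)$. Put $W_{i}:=\tilde{X}^{i}-\tilde{v}_{i}$, which vanishes at $p$. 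Expanding $[\tilde{X}^{i},\tilde{X}^{j}]=[\tilde{v}_{i}+W_{i},\tilde{v}_{j}+W_{j}]$ and using that $[Y,Z]_{p}=(d_{p}Z)(Y_{p})$ when $Z(p)=0$ (the well--defined linearization of $Z$ at its zero) and $[Z,Z']_{p}=0$ when $Z(p)=Z'(p)=0$, one gets
\[
[\tilde{X}^{i},\tilde{X}^{j}]_{p}-(dL_{p})_{Id}\bigl([v_{i},v_{j}]\bigr)=(d_{p}W_{j})(\tilde{X}^{i}_{p})-(d_{p}W_{i})(\tilde{X}^{j}_{p}).
\]
Since $\tilde{X}^{j}$ is a section of $H^{G}$, the $V^{G}$--component of $W_{j}$ equals $-\operatorname{proj}_{V^{G}}\tilde{v}_{j}$, which is independent of the horizontal extension and vanishes at $p$; hence the right--hand side reduced modulo $H^{G}_{p}$ is an antisymmetric bilinear map $\beta\colon\mathfrak{h}_{p}\times\mathfrak{h}_{p}\to V^{G}_{p}$ determined by $p$ and the first--order variation of the splitting $T(P)=V^{G}\oplus H^{G}$. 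So $[\tilde{X}^{i},\tilde{X}^{j}]_{p}\pm(dL_{p})_{Id}([v_{i},v_{j}])\in H^{G}_{p}$ amounts to $\beta(v_{i},v_{j})=0$, and the version for $\sum c_{ij}^{\ell}[\tilde{X}^{i},\tilde{X}^{j}]_{p}$ to $\sum c_{ij}^{\ell}\beta(v_{i},v_{j})=0$.

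The two cases now split. For $S^{7}$ the structure group acts on $P=Sp(2)$ by right translations, and the bi--invariant metric makes $H^{G}_{x}=(dL_{x})_{Id}(\mathfrak{h})$ with a fixed subspace $\mathfrak{h}=\mathfrak{h}_{p}\subset\mathfrak{sp}(2)$; then $\tilde{v}_{i}$ is itself horizontal, $W_{i}$ is a section of $H^{G}$ vanishing at $p$, $(d_{p}W_{i})$ maps into $H^{G}_{p}$, and $\beta\equiv 0$, so the difference identity holds for every pair with the $+$ sign. For $\Sigma^{7}_{GM}$ the relevant action is two--sided, so $V^{G}_{x}$ and $\mathfrak{h}_{x}$ vary with $x$ through a factor $Ad_{x}$, the extension $\tilde{v}_{i}$ fails to be horizontal, and $\beta\not\equiv 0$; one must then select the four local horizontal fields (equivalently the basis $\{v_{i}\}$ of $\mathfrak{h}_{p}$) and the coefficient families $\{c_{ij}^{\ell}\}$ in dependence on the region of $\Sigma^{7}_{GM}$ containing $\pi_{K}(p)$ so that $\sum c_{ij}^{\ell}\beta(v_{i},v_{j})=0$. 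Replacing left--invariant by right--invariant extensions turns $(dL_{p})_{Id}[v_{i},v_{j}]$ into $-(dR_{p})_{Id}[v_{i},v_{j}]$, which is the source both of the sign $\pm$ and of the conjugation $Ad_{p}$ appearing in \eqref{bracket generating property 2}. I expect this explicit, region--by--region choice, together with the accompanying linear--algebra verification, to be the main obstacle; it is the content of Sections~4 and~5.

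For the spanning assertion I would transport the three families to $\mathfrak{p}$ by $(dL_{p^{-1}})_{p}$: the $\tilde{X}^{i}_{p}$ go to $v_{i}$, a basis of $\mathfrak{h}_{p}$; the evaluations $\tilde{A}_{p}$, $A\in\mathfrak{k}$, span $V^{K}_{p}$; and, by the difference identity, the combinations $\sum c_{ij}^{\ell}[\tilde{X}^{i},\tilde{X}^{j}]_{p}$ go, modulo $\mathfrak{h}_{p}$, to $\pm\sum c_{ij}^{\ell}[v_{i},v_{j}]\in[\mathfrak{h}_{p},\mathfrak{h}_{p}]$. Hence spanning $T_{p}(P)$ by the three families translates into the purely algebraic condition $\{\tilde{A}_{Id}\mid A\in\mathfrak{k}\}+\mathfrak{h}_{p}+[\mathfrak{h}_{p},\mathfrak{h}_{p}]=\mathfrak{p}$ for $S^{7}$, and into its $Ad_{p}$--twisted form in the Gromoll--Meyer case (the twist arising because there the $\tilde{A}_{p}$ come from a two--sided, not a one--sided, translation) --- precisely \eqref{bracket generating property 1}, respectively \eqref{bracket generating property 2}, both granted by Proposition~\ref{step 3}. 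Finally, taking the $\tilde{X}^{i}$ to be horizontal lifts of local sections of $\mathcal{H}$ as in \S 2 (which does not alter the span, the bracket terms being extension--independent modulo $H^{G}_{p}$) and applying $(d\pi_{K})_{p}$ --- whose kernel is $V^{K}_{p}$ and which carries these lifts to the $\mathcal{H}$--sections and their brackets to the brackets on $P/K$ --- shows that $\mathcal{H}$ together with one layer of brackets spans $T_{\pi_{K}(p)}(P/K)$; since such a $p$ lies over every point of $P/K$ by Proposition~\ref{step 3}, $\mathcal{H}$ is completely non-holonomic.
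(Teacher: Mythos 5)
Your route to the difference identity is genuinely different from the paper's and, for the standard sphere, complete: you expand $[\tilde v_i+W_i,\tilde v_j+W_j]$ with $W_i=\tilde X^i-\tilde v_i$ vanishing at $p$ and observe that the derivative at a zero of a section of $H^G$ stays in $H^G_p$, whereas the paper never differentiates the vector fields at all but instead evaluates $d\theta^A$ via Cartan's formula on the two pairs of extensions $(\tilde X^i,\tilde X^j)$ and $(\tilde v_i,\tilde v_j)$, which agree at $p$, and compares the two results (proof of Theorem \ref{MM}; Lemmas \ref{basic lemma GM 1} and \ref{basic lemma GM 2}). Your reduction of the spanning assertion to Proposition \ref{step 3} by transporting everything with $(dL_{p^{-1}})_p$, and the final descent along $d\pi_K$ with kernel $V^K_p$, also match the paper. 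For $S^7$ the two arguments are equivalent and yours is arguably more elementary.

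For the Gromoll--Meyer case, however, there is a genuine gap. You compress the obstruction into a bilinear map $\beta:\mathfrak h_p\times\mathfrak h_p\to V^G_p$ and assert that membership with either sign ``amounts to $\beta(v_i,v_j)=0$,'' deferring the verification. This is wrong for the $+$ sign: $[\tilde X^i,\tilde X^j]_p+(dL_p)_{Id}([v_i,v_j])\in H^G_p$ requires $\beta(v_i,v_j)=-2\,\mathrm{proj}_{V^G_p}\bigl((dL_p)_{Id}([v_i,v_j])\bigr)$, not $\beta=0$; indeed, if both signs gave membership then $[v_i,v_j]$ would lie in $\mathfrak h_p$ and the bracket would generate nothing new. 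The actual mechanism, which your setup does not produce, is that $\beta$ is computed exactly and splits along the two vertical directions $(dR_p)_{Id}(a\cdot Id)$ and $(dL_p)_{Id}(b^{+})$ of $V^G$ with \emph{opposite} signs: writing $(dL_p)_{Id}(Z)=[\tilde X,\tilde Y]_p$, Lemma \ref{basic lemma GM 1} gives $\text{Tr}\bigl(Ad_p(Z+[u,v])\bigr)=0$ while Lemma \ref{basic lemma GM 2} gives $\langle\lambda^{+},Z-[u,v]\rangle=0$. Consequently the sign in the proposition is $-$ exactly when $\text{Tr}\bigl(Ad_p([u,v])\bigr)=0$ (the pair $[u_0,u_{\bf i}]$ and the combinations $U_{\bf j},U_{\bf k}$, whose coefficient $\alpha(v)$ is chosen precisely to kill this trace), and $+$ exactly when the $(1,1)$-component of $[u,v]$ vanishes (case I-b). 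Without these two exact identities the ``region-by-region choice'' you defer to cannot be carried out, because one does not know which linear equations the coefficients $c_{ij}^{\ell}$ must solve; they are the real content of the proposition beyond the standard-sphere case.
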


\section{A co-dimension $3$ sub-Riemannian structure on the standard $7$ sphere} 

We give a simple proof for the existence of a co-dimension $3$ 
completely non-holonomic sub-bundle on the standard $7$ sphere
(cf. \cite{MM} for a more direct approach). The
method is valid for all $4n+3$ dimensional standard spheres.

First, we describe  a co-dimension $3$ sub-bundle in the tangent bundle
of the
standard $7$ sphere $S^{7}$.

Let $\mathbb{H}$ be the quaternion number field:
\[
\mathbb{H}=\bigr\{h=h_0+h_{1}{\bf i}+h_{2}{\bf j}+h_{3}{\bf k}~\bigr|~h_{i}\in\mathbb{R}\bigr\},
\]
with the usual product and conjugation operations:
\begin{align*}
&{\bf i}^2={\bf j}^2={\bf k}^2=-1,~{\bf i}{\bf j}={\bf k}=-{\bf j}{\bf
  i},~{\bf j}{\bf k}=-{\bf k}{\bf j}={\bf i},~\text{etc.}\\
&\overline{h}=h_0-h_{1}{\bf i}-h_{2}{\bf j}-h_{3}{\bf k}~\,\,\text{and}\,\,~|h|=\sqrt{h\overline{h}}.
\end{align*}

Let {Sp(2)} be the group of quaternionic 
symplectic $2\times 2$ matrices:
\[
\text{Sp(2)}
=\biggr\{~p=\begin{pmatrix}x&y\\w&z\end{pmatrix}~\Bigr|~x,y,w,z\in\mathbb{H},~p\cdot
p^*=p^{*}\cdot p=Id~\biggr\},
\]
where
\[
p^{*}=\begin{pmatrix}\overline{x}&\overline{w}\\\overline{y}&\overline{z}\end{pmatrix}
\]
is the adjoint matrix of $p$. We denote its Lie algebra
by $\mathfrak{sp}(2)$:
\[
\mathfrak{sp}(2)=\left\{
\begin{pmatrix}\alpha&\beta\\-\overline{\beta}&\gamma\end{pmatrix}~\Bigr|~
\alpha=-\overline{\alpha},\gamma=-\overline{\gamma},\beta\in\mathbb{H}~
\right\}.
\]

Let $G$ = Sp(1)$\times$ Sp(1)
$=\{(\lambda,\,\mu)~|~|\lambda|=|\mu|=1~\}$
and write  $\mathfrak{g}$ for  its Lie algebra. 
We define the action 
of the group $G$ on {Sp(2)}
by the right multiplication:
\begin{align*}
&{R}:\text{Sp(2)}\times G\to \text{Sp(2)},\quad (p;\lambda,\,\mu)
\mapsto R_{(\lambda,\,\mu)}(p)\\
&R_{(\lambda,\,\mu)}(p)=
\begin{pmatrix}x\overline{\lambda}&y\overline{\mu}\\w\overline{\lambda}&z\overline{\mu}
\end{pmatrix} 
=\begin{pmatrix}x&y\\w&z\\\end{pmatrix}
\begin{pmatrix}\overline{\lambda}&0\\0&\overline{\mu}\\\end{pmatrix},\\  
&R_{(\lambda_{1},\,\mu_{1})}\circ R_{(\lambda,\,\mu)}=R_{(\lambda_1\cdot\lambda,\,\mu_{1}\cdot \mu)}
\end{align*}
and {we as well consider} 
its restriction to the subgroup 
$K=\text{Sp(1)}\times\{Id\}\subset G$. 
\bigskip

Then we have two principal bundles. One is {(cf. \cite{GM})}
 \begin{align*}
&\pi_{G}\,:\,\text{Sp(2)}\longrightarrow S^{4},
  ~p=\begin{pmatrix}x&y\\w&z\\\end{pmatrix}\mapsto \bigr(2y\cdot\overline{z},
  |y|^2-|z|^2\bigr)\in S^4,
\end{align*}
with the base space $S^4$ and the structure group $G\cong \text{Sp(1)}\times \text{Sp(1)}$,
and {the other} 
\begin{align*}
&\pi_{K}\,:\,\text{Sp(2)}\longrightarrow S^{7}, 
~p=\begin{pmatrix}x&y\\w&z\\\end{pmatrix}\mapsto \bigr(y, z\bigr)\in S^7.
\end{align*}
with the base space being the standard 7 sphere $S^7$ and with 
the structure group 
$$K= \text{Sp(1)} \times\{Id\}\subset G.$$ 

Since $K$ is a normal subgroup of $G$, we obtain a principal bundle called {\it Hopf bundle},
\[
\pi_{Hp}\,:\,S^{7}\longrightarrow S^{4},
\]
with the structure group {$G/K ~\cong~\text{Sp(1)}$}. We will denote the structure group action by $\overline{R}:S^7\times
G/K \to S^{7}$.

We identify (trivialize) the tangent bundle $T(\text{Sp(2)})$  
through the left invariant vector fields:
\begin{equation}\label{trivialization of tangent bundle}
\text{Sp(2)}\times \mathfrak{sp}(2)\cong T\text{(Sp(2))},
\end{equation}
where the identification is given by
\begin{equation*}
\mathfrak{sp}(2)\ni u\longmapsto \tilde{u}_{p}\in  T_{p}(\text{Sp(2)})
\end{equation*}
$\tilde{u}$~ is the left invariant vector field.
\vspace{1ex}\par 
Let $<\bullet,\,\bullet>$ be the inner product on $\mathfrak{sp}(2)$ given by
\[
<u,\,v>\,\stackrel{Def}=\,{\text{Re(Tr\,$u\cdot v^{*}$)}}=\text{Re}(x\cdot \overline{a}+
y\cdot \overline{b}+w\cdot \overline{c}+z\cdot \overline{d}),
\]
for $u=\begin{pmatrix}x&y\\w&z\end{pmatrix}$
and
$v=\begin{pmatrix}{a}&{b}\\{c}&{d}\end{pmatrix}$.

Since 
\[
<Ad_{g}(u),\,Ad_{g}(v)>=<u,\,v>,
\]
we can define a left and right invariant Riemannian metric 
$(\bullet,\,\bullet)$
on $\text{Sp(2)}$ 
through the above identification 
(\ref{trivialization of tangent bundle}).
 
Let $V^{G}_{p}\subset T_{p}(\text{Sp(2)})$ be the tangent space to the
fibers of the principal bundle $\pi_{G}: \text{Sp(2)} \to S^{4}$, 
that is, at $p \in \text{Sp(2)}$
\[
V_{p}^{G}=\left\{
X\in T_{p}(\text{Sp(2)})~\biggr|~ X=(dL_{p})_{Id}
\begin{pmatrix}x&0\\0&z\end{pmatrix},\,
x=-\overline{x},\,z=-\overline{z}\,\in\,\mathbb{H}
\right\}
\]
and let us denote hy $H^{G}$ the orthogonal complement to $V^{G}$ with respect to the
Riemannian metric $(\bullet,\,\bullet)$:
\begin{multline*}
H^{G}_{p}=\Big{\{}
Y\in
T_{p}(\text{Sp(2)})~\biggr|~
< (dL_{p^{-1}})_p(Y), u >=0, \\~\text{for
  any}~u=\begin{pmatrix}x&0\\0&z\end{pmatrix},
x=-\overline{x},\,z=-\overline{z}\in\mathbb{H}\Big{\}}.
\end{multline*}
So, $Y\in H_p^G$ is of the form
$$Y=(dL_{p})_{Id}\begin{pmatrix}0&a\\-\overline{a}&0\end{pmatrix}, \hspace{3ex} a \in \mathbb{H}. $$
\par 
In particular, at the identity element $Id=\begin{pmatrix}1&0\\0&1\end{pmatrix}\in \text{Sp(2)}$ and via the identification $T_{Id}(\text{Sp(2)})\cong \mathfrak{sp}(2)$ 
we have the orthogonal decomposition:
\[
\mathfrak{sp}(2)
\ni u=\begin{pmatrix}x&y\\-\overline{y}&z\end{pmatrix}=
\begin{pmatrix}x&0\\{0}&z\end{pmatrix}\oplus
\begin{pmatrix}0&y\\-\overline{y}&0\end{pmatrix}\,
\in \,
V^{G}_{Id}\oplus H^{G}_{Id}.
\]                                             

For $p\in \text{Sp(2)}$,
let $V_{p}^{K}$ be the tangent space 
to the orbit of the action $K$ through a
point $p\in \text{Sp(2)}$, then
\begin{align*}
&V_{p}^{K}=\left\{
X=(dL_{p})_{Id}(u)~\biggr|~u=\begin{pmatrix}x&0\\{0}&0\end{pmatrix}, 
~-\overline{x}=x\,\in\mathbb{H}~\right\}.\\
\intertext{
The orthogonal complement $H^{K}_{p}$ of $V^{K}_{p}$ is} 
&
H_{p}^K=\left\{
X=(dL_{p})(u)~\biggr|~u=\begin{pmatrix}0&y\\-\overline{y}&z\end{pmatrix},\,
-\overline{z}=z, y\,\in\mathbb{H}
\right\}.
\end{align*}
\par 
Since the decomposition $T(\text{Sp(2)}) \cong H^{G}\oplus V^{G}$ is $Ad$-equivariant (see (\ref{Ad equivariant 1}), (\ref{Ad equivariant
  2}) and (\ref{Ad equivariant 3})) under the orthogonal action of the group $G$ and the decomposition $T(\text{Sp(2)})\cong H^{K}\oplus V^{K}$
is $Ad$-equivariant 
under the orthogonal action of the group $K$,
each sub-bundle $H^{G}$ and $H^{K}$ defines 
a connection on the principal bundles
\begin{align*}
&\pi_{G}:\text{Sp(2)}\longrightarrow S^4,\\
&\pi_{K}:\text{Sp(2)}\longrightarrow S^{7},
\end{align*}
respectively.
Moreover,
since the sub-bundle $H^{G}$ is $Ad$-equivariant with respect
to the structure group $G$ it defines not only a sub-bundle in $H^{K}$ but 
also induces a sub-bundle $\mathcal{H}^{S}$ in $T(S^{7})$.
\vspace{1ex}\par 
The sub-bundle $H^{G}$ defines a connection of the Hopf bundle 
$\pi_{Hp}:S^7\to S^{4}\cong P^1(\mathbb{H})$,
that is the sub-bundle $d\pi_{K}(H^G)=\mathcal{H}^{S}$ satisfies
\[
\bigr(d\overline{R}_{\overline{g}}\bigr)_{x}\bigr(\mathcal{H}^{S}_{x}\bigr)=
\bigr(\mathcal{H}^{S}_{\overline{R}_{\overline{g}}(x)}\bigr),
~\overline{g}\in G/K,~x\in P/K.
\]

\begin{thm}{\em \cite{MM}}\label{MM}
The sub-bundle $\mathcal{H}^{S}$ is completely non-holonomic of step 2.
\end{thm}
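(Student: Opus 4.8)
The plan is to deduce the statement from the three-step scheme of $\S 2$: by Propositions~\ref{step 3} and~\ref{difference} it suffices to establish the Lie-algebra identity \eqref{bracket generating property 1} for $P=\text{Sp(2)}$ and $K=\text{Sp(1)}\times\{Id\}$, and then to transport it down to $T(S^{7})$. The feature that makes the standard case easy is that $H^{G}$ is \emph{left invariant}: since $V^{G}_{p}=(dL_{p})_{Id}(V^{G}_{Id})$ by definition and the metric on $\text{Sp(2)}$ is left invariant, the orthogonal complement also satisfies $H^{G}_{p}=(dL_{p})_{Id}(H^{G}_{Id})$. Hence $\mathfrak{h}_{p}=(dL_{p^{-1}})_{p}(H^{G}_{p})$ equals, for \emph{every} $p$, the single $4$-dimensional subspace $\mathfrak{h}=\bigl\{\begin{pmatrix}0&a\\-\overline{a}&0\end{pmatrix}\mid a\in\mathbb{H}\bigr\}$ of $\mathfrak{sp}(2)$, so \eqref{bracket generating property 1} reduces to one fixed identity.

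First I would compute $[\mathfrak{h},\mathfrak{h}]$. For $u=\begin{pmatrix}0&a\\-\overline{a}&0\end{pmatrix}$ and $v=\begin{pmatrix}0&b\\-\overline{b}&0\end{pmatrix}$ a direct multiplication gives $[u,v]=\begin{pmatrix}b\overline{a}-a\overline{b}&0\\0&\overline{b}a-\overline{a}b\end{pmatrix}$, and each diagonal entry has the form $w-\overline{w}$, hence is purely imaginary; so $[\mathfrak{h},\mathfrak{h}]\subseteq\mathfrak{d}:=\bigl\{\begin{pmatrix}x&0\\0&z\end{pmatrix}\mid x=-\overline{x},\ z=-\overline{z}\bigr\}$, a $6$-dimensional space which is exactly $V^{G}_{Id}$. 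Evaluating $[u,v]$ on $(a,b)=(1,\mathbf{i}),(1,\mathbf{j}),(1,\mathbf{k})$ yields $\begin{pmatrix}2c&0\\0&-2c\end{pmatrix}$ and on $(a,b)=(\mathbf{j},\mathbf{k}),(\mathbf{k},\mathbf{i}),(\mathbf{i},\mathbf{j})$ yields $\begin{pmatrix}2c&0\\0&2c\end{pmatrix}$, for $c=\mathbf{i},\mathbf{j},\mathbf{k}$ respectively; sums and differences of matching pairs isolate each $\begin{pmatrix}c&0\\0&0\end{pmatrix}$ and $\begin{pmatrix}0&0\\0&c\end{pmatrix}$, so these six brackets span $\mathfrak{d}$ and thus $[\mathfrak{h},\mathfrak{h}]=\mathfrak{d}$. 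Since $\mathfrak{k}=\bigl\{\begin{pmatrix}x&0\\0&0\end{pmatrix}\mid x=-\overline{x}\bigr\}\subset\mathfrak{d}$ and $\mathfrak{h}$ is the complementary off-diagonal block, this gives $\{\tilde A_{Id}\mid A\in\mathfrak{k}\}+\mathfrak{h}+[\mathfrak{h},\mathfrak{h}]=\mathfrak{h}\oplus\mathfrak{d}=\mathfrak{sp}(2)$, i.e.\ \eqref{bracket generating property 1} at every $p\in\text{Sp(2)}$.

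Then I would push this down to $S^{7}$ as in $\S 2.2$. Fix $q\in S^{7}$, pick any $p\in\pi_{K}^{-1}(q)$, let $X^{1},\dots,X^{4}$ be a local frame of $\mathcal{H}^{S}$ near $q$, let $\tilde X^{i}$ be its $H^{K}$-horizontal lift (automatically $H^{G}$-valued since $H^{G}\subset H^{K}$), and set $v_{i}\in\mathfrak{h}$ with $(dL_{p})_{Id}(v_{i})=\tilde X^{i}_{p}$, so $\{v_{i}\}$ is a basis of $\mathfrak{h}$. By tensoriality of the curvature of the left-invariant connection $H^{G}$, the $V^{G}$-component of $[\tilde X^{i},\tilde X^{j}]_{p}$ depends only on $\tilde X^{i}_{p},\tilde X^{j}_{p}$, hence equals that of $[\tilde v_{i},\tilde v_{j}]_{p}=(dL_{p})_{Id}([v_{i},v_{j}])$ for the left-invariant fields $\tilde v_{i}$; as $[v_{i},v_{j}]\in[\mathfrak{h},\mathfrak{h}]=\mathfrak{d}=V^{G}_{Id}$ is entirely vertical, we get $[\tilde X^{i},\tilde X^{j}]_{p}-(dL_{p})_{Id}([v_{i},v_{j}])\in H^{G}_{p}$ for all pairs, i.e.\ Proposition~\ref{difference}. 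Applying $d\pi_{K}$ then gives $[X^{i},X^{j}]_{q}\equiv\Phi([v_{i},v_{j}])\pmod{\mathcal{H}^{S}_{q}}$, where $\Phi:=(d\pi_{K})_{p}\circ(dL_{p})_{Id}$ is surjective onto $T_{q}(S^{7})$ with $\ker\Phi=\mathfrak{k}$ (since $\ker(d\pi_{K})_{p}=V^{K}_{p}=(dL_{p})_{Id}(\mathfrak{k})$). Feeding the identity of the previous paragraph through $\Phi$: $\Phi(\mathfrak{k})=0$, $\Phi(\mathfrak{h})=\mathcal{H}^{S}_{q}$ isomorphically (as $\mathfrak{h}\cap\mathfrak{k}=0$), and $\Phi([\mathfrak{h},\mathfrak{h}])=\Phi(\mathfrak{d})$ has dimension $6-3=3$; since the images sum to $T_{q}(S^{7})$ and $4+3=7=\dim S^{7}$, we conclude $\mathcal{H}^{S}_{q}\oplus[\mathcal{H}^{S},\mathcal{H}^{S}]_{q}=T_{q}(S^{7})$. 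Only one level of brackets was used and $q$ was arbitrary, so $\mathcal{H}^{S}$ is completely non-holonomic of step $2$.

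The computations are routine quaternionic linear algebra and, in this homogeneous setting, nothing is genuinely hard; the one place that must be watched is Proposition~\ref{difference}, namely that the bracket of horizontal lifts agrees, modulo $H^{G}_{p}$, with the left translate of the Lie bracket. Here this is immediate from left-invariance of $H^{G}$ together with the inclusion $[\mathfrak{h},\mathfrak{h}]\subseteq V^{G}_{Id}$, and it is precisely the ingredient that the three-step method isolates. Conceptually, the identity $[\mathfrak{h},\mathfrak{h}]=\mathfrak{d}$ is just the familiar fact that the curvature of the quaternionic Hopf connection on $\pi_{Hp}:S^{7}\to S^{4}$, whose horizontal distribution is $\mathcal{H}^{S}$, has full rank $3$ at every point, computed here at the level of $\mathfrak{sp}(2)$.
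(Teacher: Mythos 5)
Your proposal is correct and follows essentially the same route as the paper: reduce, via the three-step scheme of \S 2, to the single Lie-algebra identity $H^{G}_{Id}+[H^{G}_{Id},H^{G}_{Id}]=\mathfrak{sp}(2)$ (using left invariance of $H^{G}$, so $\mathfrak{h}_{p}$ is independent of $p$) and verify it by the same quaternionic bracket computation. The only difference is presentational: you justify Proposition \ref{difference} by tensoriality of the vertical component of brackets of horizontal fields, whereas the paper proves exactly this fact by evaluating $d\theta^{A}$ on $(\tilde{X},\tilde{Y})$ and on the left-invariant fields $(\tilde{u},\tilde{v})$ — two forms of the same argument.
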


\begin{proof}
We denote by $\Gamma(\mathcal{H}^{S})$ the space of vector fields taking
values in $\mathcal{H}^{S}$. 
Then
we show that the evaluations of 
vectors fields 
in 
$\Gamma(\mathcal{H}^{S})+[\Gamma(\mathcal{H}^{S}),\,\Gamma(\mathcal{H}^{S})]$
span the whole tangent space at each point in $S^7$.
\vspace{1ex}\par 
Fix a point $q\in S^{7}$ and let $X$, $Y$ be vector fields in $\Gamma(\mathcal{H}^{S})$ defined around $q$.  
We denote their horizontal lifts by $\tilde{X}$ and $\tilde{Y}$ defined around $p\in \text{Sp(2)}$ with $\pi_{K}(p)=q$. 
We may take vectors $u,v\in
H^{G}_{Id}\subset \mathfrak{sp}(2)$ such that
\begin{align*}
&\tilde{X}_{p}=(dL_{p})_{Id}(u),\tilde{Y}_{p}=(dL_{p})_{Id}(v),
\end{align*}
Since the fundamental vector field $\tilde{A}~(A\in\mathfrak{g})$ is left invariant on $\text{Sp(2)}$, we have
\begin{equation*}
\theta^{A}(\tilde{u})(p) \equiv <A,\,u> =~\text{constant}.
\end{equation*}
Hence
\[
\theta^{A}\big{(}[\tilde{X},\,\tilde{Y}]\big{)}(p)
=\theta^{A}\big{(}(dL_{p})_{Id}([u,v])\big{)}=<A,\,[u,v]>.
\]
In fact, this can be seen as follows:
\begin{align*}
&d\theta^{A}(\tilde{u}_{p},\,\tilde{v}_{p})=
d\theta^{A}(\tilde{X}_{p},\tilde{Y}_{p})\\
&=\tilde{X}_{p}(\theta^A(\tilde{Y}))
-\tilde{Y}_{p}(\theta^A(\tilde{X}))
-\theta^{A}([\tilde{X},\tilde{Y}])(p)\\
&=-\theta^{A}([\tilde{X},\tilde{Y}])(p),
\intertext{because by the definition of $\tilde{X}$ and $\tilde{Y}$,
$\theta^{A}(\tilde{X})=\theta^{A}(\tilde{Y})\equiv 0$.
On the other hand}
&d\theta^{A}(\tilde{u}_{p},\,\tilde{v}_{p})
=\tilde{u}_{p}(\theta^{A}(\tilde{v}))
-\tilde{v}_{p}(\theta^{A}(\tilde{u}))
-\theta^{A}([\tilde{u},\tilde{v}])(p)\\
&=-\theta^{A}([\tilde{u},\tilde{v}])(p)=-<A,[u,\,v]>.
\end{align*}
This means that
$$[\tilde{X},\,\tilde{Y}]_{p}-(dL_{p})_{Id}([u,\,v])\in H_{p}^{G}$$
(see Proposition \ref{difference}). Therefore
it is enough to 
show that
\[
H^{G}_{Id}+[H^{G}_{Id},\,H^{G}_{Id}]_{Id}=\mathfrak{sp}(2).
\]
For this purpose, we take a basis of $H^{G}_{Id}=\mathfrak{h}_{Id}$ 
\[
u_{0}=\begin{pmatrix}0&1\\-{1}&0\end{pmatrix},\,\,
u_{1}=\begin{pmatrix}0&{\bf i}\\{\bf i}&0\end{pmatrix},\,\,
u_{2}=\begin{pmatrix}0&{\bf j}\\{\bf j}&0\end{pmatrix},\,\,
u_{3}=\begin{pmatrix}0&{\bf k}\\{\bf k}&0\end{pmatrix}.
\]
Then
\begin{align*}
&[u_0,u_{1}]=\begin{pmatrix}2{\bf i}&0\\0&-2{\bf i}\end{pmatrix},\,
[u_0,u_{2}]=\begin{pmatrix}2{\bf j}&0\\0&-2{\bf j}\end{pmatrix},\,
[u_0,u_{3}]=\begin{pmatrix}2{\bf k}&0\\0&-2{\bf k}\end{pmatrix},\,\\
&[u_1,u_{2}]=\begin{pmatrix}2{\bf k}&0\\0&2{\bf k}\end{pmatrix},\,
[u_{1},u_{3}]=-\begin{pmatrix}2{\bf j}&0\\0&2{\bf j}\end{pmatrix},\,
[u_2,u_{3}]=\begin{pmatrix}2{\bf i}&0\\0&2{\bf i}\end{pmatrix}.
\end{align*}
Hence these $10$ vectors span the tangent space $\mathfrak{sp}(2)\cong
T_{Id}(\text{Sp}(2))$, which shows {Theorem} \ref{MM}
(see Propositions \ref{step 3} and \ref{difference}).
\end{proof}

\section{A co-dimension $3$ sub-Riemannian structure  
on the Gromoll-Meyer exotic $7$ sphere}
First we recall the definition of an exotic $7$ sphere 
(called Gromoll-Meyer exotic sphere) following the
description in \cite{GM}. We define a co-dimension $3$ sub-bundle 
in the tangent bundle of the Gromoll-Meyer exotic $7$ sphere. In the next section it will be shown 
that this sub-bundle is $2$ step completely non-holonomic. 

\vspace{1ex}\par 
Consider an action $E:\text{Sp(2)}\times G\to \text{Sp(2)}$ 
on $\text{Sp(2)}$ where $G= \text{Sp(1)}\times \text{Sp(1)}$,
\begin{align*}
&E:\text{Sp(2)}\times G\to \text{Sp(2)}\\
&E_{(\lambda,\,\mu)}(p)=
\begin{pmatrix}\lambda x\overline{\mu}&\lambda y\\\lambda w\overline{\mu}&\lambda z\end{pmatrix}
=\begin{pmatrix}\lambda &0\\0&\lambda \end{pmatrix}
\begin{pmatrix}x& y\\w&z\end{pmatrix}
\begin{pmatrix}\overline{\mu}&0\\0&1\end{pmatrix}\\
&E_{(\lambda_1,\,\mu_{1})}\,\circ\,E_{(\lambda,\,\mu)}=E_{(\lambda_1\cdot\lambda,\,\mu_{1}\cdot\mu)}
\end{align*}
and its restriction $E^{\Delta}$
to the subgroup 
$$\Delta=\{(\lambda,\lambda)\in \text{Sp(1)}\times \text{Sp}(1)\}\cong \text{Sp(1)}.$$
\par 
The total space $\text{Sp(2)}$ is equipped with the left and right invariant metric as in
$\S 3$.
Then
we have a principal bundle 
$P_{GM}=\{\text{Sp}(2),\Delta, \Sigma^{7}_{GM}\}$
with the orthogonal action of the 
structure group $\Delta$:
\[
\pi_{GM}:\text{Sp}(2)\longrightarrow \text{Sp(2)}/\Delta:=\Sigma^{7}_{GM},
\]
The base space is called {\it Gromoll-Meyer exotic $7$ sphere} (\cite{GM}). 
Also we have a principal bundle $P_{G}=\{\text{Sp}(2), G, S^4\}$  
with the orthogonal action of the structure group $G$
\[
\rho:\text{Sp(2)}\longrightarrow \text{Sp(2)}/G\cong S^4,
\]
where 
the identification of the base space with $S^{4}$ is induced through the map
\begin{align*}
&\text{Sp}(2)\longrightarrow S^{4}\subset \mathbb{H}\times \mathbb{R},\\
&p=\begin{pmatrix}x&y\\w&z\end{pmatrix}
\longmapsto
(2\overline{y}\cdot z,\,{|y|^2-|z|^2}).
\end{align*}
For  any $\lambda\in \mathfrak{sp}(1)$, we write 
$\lambda^{+}=\begin{pmatrix}\lambda&0\\0&0\end{pmatrix}$.

By definition of the action $E$, the fundamental vector field
$\tilde{A}$ for $A=(a,b)\in\mathfrak{sp}(1)\times \mathfrak{sp}(1)=\mathfrak{g}$
is given by
\begin{equation}\label{First_definition_Fundamental_Vectorfields}
\tilde{A}_{p}=(dR_{p})_{Id}(a\cdot Id)-(dL_{p})_{Id}(b^{+}).
\end{equation}
Hence for $u\in \mathfrak{sp}(2)$
\begin{align*}
\theta^{A}(\tilde{u})(p)
&=(\tilde{A}_{p},\tilde{u}_{p})_{p}
=\bigr((dR_{p})_{Id}(a\cdot Id)-(dL_{p})_{Id}(b^{+}),\tilde{u}_{p}\bigr)_{p}\\
&=\bigr((dR_{p})_{Id}(a)-(dL_{p})_{Id}(b^{+}),(dL_{p})_{Id}(u)\bigr)_{p}\\
&= <\,a\cdot Id,\,Ad_{p}(u)\,> - <b^{+},u>.\notag
\end{align*}

We consider two orthogonal decompositions of $T(\text{Sp(2)})$ as in $\S 3$ 
by vertical and horizontal sub-bundles according to the principal bundles
$P_{GM}$ and {$P_{G}$}:
\begin{align*}
T{(\text{Sp(2)})}&=V^{\Delta}\oplus H^{\Delta},\\
V^{\Delta}_{p}&=\Bigr\{~(dR_{p})_{Id}(\lambda\cdot{Id})\,-\,(dL_{p})_{Id}(\lambda^{+})
~\Bigr|~\overline{\lambda}=-\lambda~\Bigr\},\\
H^{\Delta}_{p}&=(V^{\Delta}_{p})^{\perp},\\
&\qquad\\
T{(\text{Sp(2)})}&=V^{G}\oplus H^{G},\\
V^{G}_{p}&=\Bigr\{(dR_{p})_{{Id}}\bigr(\lambda\cdot{Id}\bigr)
\,-\,(dL_{p})_{{Id}}(\mu^{+})~\Bigr
|~\lambda=-\overline{\lambda},\mu=-\overline{\mu}~\Bigr\},\\
H^{G}_{p}&=(V^{G}_{p})^{\perp}\\
&=(dL_{p})_{Id}\left(\Bigr\{~u\in\mathfrak{sp}(2)~\Bigr|~
u=\begin{pmatrix}0&\beta\\-\overline{\beta}&\gamma\end{pmatrix},~\text{and}
~\text{Tr}\,(Ad_{p}(u))=0\Bigr\}\right)\\
&=:(dL_{p})_{Id}(\mathfrak{h}_{p}).
\end{align*}
Then
\[
V^{\Delta}\subset V^{G} \hspace{3ex} \text{\it and} \hspace{3ex} H^{\Delta}\supset H^{G}
\]
and both of $H^{\Delta}$ and $H^{G}$ are ``{\it Ad-equivariant}'' with
respect to the action of the structure group
$\Delta$ and $G$, respectively. Hence they
define a connection on each principal bundle $P_{GM}$ and $P_{S}$. 
Since $H^{G}\subset H^{\Delta}$
and $H^{G}$ is Ad-equivariant under the structure group action of
$\Delta$, 
$H^{G}$ defines a sub-bundle $\mathcal{H}^{\Sigma}$ of
the tangent bundle $T(\Sigma^7_{GM})$.
\vspace{1ex}\par 
Now we state our main theorem.
\begin{thm}\label{main theorem}
${\mathcal{H}^{\Sigma}}$ is a co-dimension $3$ 
completely non-holonomic sub-bundle
in $T(\Sigma^7_{GM})$ and of step $2$.
\end{thm}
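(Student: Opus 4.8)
The plan is to run the three-step scheme of \S2 for the principal bundle $\pi_{GM}\colon\text{Sp}(2)\to\Sigma^7_{GM}$ with structure group $\Delta$, reducing the theorem to the Lie-algebraic assertion \eqref{bracket generating property 2} of Proposition~\ref{step 3}. Since $\mathfrak h_p$ has dimension $4$ and $\Sigma^7_{GM}$ has dimension $7$, the co-dimension is automatically $3$; and ``step $2$'' will mean that at a suitable $p$ over each point, $\mathfrak h_p$ together with a $3$-dimensional space of \emph{first} brackets and the fibre directions $V^\Delta_p$ already spans $\mathfrak{sp}(2)$. This spans $T_p\text{Sp}(2)$, and since $V^\Delta_p=\ker(d\pi_{GM})_p$ it projects onto $\mathcal H^\Sigma_q+[\mathcal H^\Sigma,\mathcal H^\Sigma]_q=T_q\Sigma^7_{GM}$. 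Because $H^G$ is $Ad$-equivariant for the whole group $G\supset\Delta$, the sub-bundle $\mathcal{H}^{\Sigma}$ is invariant under the residual $G/\Delta\cong\text{Sp}(1)$-action on $\Sigma^7_{GM}$, so the bracket-generating property at $q$ depends only on the image of $q$ in $S^4=\text{Sp}(2)/G$; I would therefore fix a finite atlas of $S^4$ with local sections of $\rho\colon\text{Sp}(2)\to S^4$ and verify \eqref{bracket generating property 2} chart by chart at the images of those sections.

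Next comes the passage from geometric to Lie-algebra brackets (\S2.2 and Proposition~\ref{difference}). Repeating the $\theta^A$-computation from the proof of Theorem~\ref{MM}, but now with the fundamental vector field of the twisted action, $\tilde A_p=(dR_p)_{Id}(a\cdot Id)-(dL_p)_{Id}(b^+)$, one finds, for $v_i,v_j\in\mathfrak h_p$ with $(dL_p)_{Id}(v_i)=\tilde X^i_p$,
\[
\theta^A\big([\tilde X^i,\tilde X^j]\big)(p)=-\langle a\cdot Id,\,Ad_p[v_i,v_j]\rangle-\langle b^+,[v_i,v_j]\rangle ,
\]
while $\theta^A\big((dL_p)_{Id}[v_i,v_j]\big)=\langle a\cdot Id,\,Ad_p[v_i,v_j]\rangle-\langle b^+,[v_i,v_j]\rangle$ by the formula for $\theta^A$ in \S4. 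Comparing the two, $[\tilde X^i,\tilde X^j]_p-(dL_p)_{Id}([v_i,v_j])\in H^G_p$ precisely when $\operatorname{Tr}\big(Ad_p[v_i,v_j]\big)=0$, whereas $[\tilde X^i,\tilde X^j]_p+(dL_p)_{Id}([v_i,v_j])\in H^G_p$ precisely when the $(1,1)$-block of $[v_i,v_j]$ vanishes, i.e. (writing $v_i=\begin{pmatrix}0&\beta_i\\-\overline{\beta_i}&\gamma_i\end{pmatrix}$) when $\operatorname{Im}(\beta_j\overline{\beta_i})=0$; the same dichotomy, with the sign chosen accordingly, governs the combinations $\sum c^\ell_{ij}[v_i,v_j]$ of Proposition~\ref{difference}. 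So only brackets of these two ``particular'' types may be used, and what remains is the purely linear-algebraic task of choosing the four horizontal vector fields $\tilde X^i$ (equivalently a basis $v_i$ of $\mathfrak h_p$) and the coefficients $c^\ell_{ij}$ so that every bracket-combination used is particular and, together with $\mathfrak h_p$ and $V^\Delta_p$, still spans $\mathfrak{sp}(2)$ -- that is, so that \eqref{bracket generating property 2} holds.

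The core, and the main obstacle, is this last step, carried out region by region. In each chart one writes $\mathfrak h_p$ explicitly: it is cut out of the $7$-dimensional space $\big\{\,u=\begin{pmatrix}0&\beta\\-\overline\beta&\gamma\end{pmatrix}\,\big\}$ by the three real equations $\operatorname{Tr}(Ad_p u)=2\operatorname{Im}(x\beta\overline y+w\beta\overline z)+y\gamma\overline y+z\gamma\overline z=0$, with $p=\begin{pmatrix}x&y\\w&z\end{pmatrix}$. Where the $\operatorname{Im}\mathbb H$-linear map $\gamma\mapsto y\gamma\overline y+z\gamma\overline z$ is invertible one can parametrise $\mathfrak h_p$ by $\beta\in\mathbb H$, producing a natural four-parameter family of candidate horizontal vector fields; but this map (a convex combination of the two conjugation rotations of $\operatorname{Im}\mathbb H$ by $y/|y|$ and $z/|z|$) degenerates along a proper subvariety, and there one must instead solve for some of the components of $\beta$ -- this is what the introduction's remark about ``selecting four local vector fields according to the point'' refers to. In each region one then computes the brackets $[v_i,v_j]\in\mathfrak{sp}(2)$, forms the admissible $\pm$-combinations, and checks that the four vectors $v_i$, three independent admissible bracket-combinations, and the three fibre directions form a basis of $\mathfrak{sp}(2)$ for every $p$ in the region (a nonvanishing-determinant condition). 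I expect the delicate point to be exactly this case analysis: making the finitely many regions cover $\Sigma^7_{GM}$, keeping the chosen combinations particular and independent throughout each region, and confirming that the first brackets alone suffice, so that the step is indeed $2$; it is here that the exotic twisting $\lambda x\overline\mu$ of the Gromoll-Meyer action makes the computation genuinely heavier than the standard-sphere case of \S3.
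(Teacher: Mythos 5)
Your overall architecture coincides with the paper's: reduce Theorem~\ref{main theorem} to the Lie--algebraic condition \eqref{bracket generating property 2}, use the $\theta^{A}$--computation with the twisted fundamental vector fields \eqref{First_definition_Fundamental_Vectorfields} to compare $[\tilde X^{i},\tilde X^{j}]_{p}$ with $(dL_{p})_{Id}([v_{i},v_{j}])$, and finish by pointwise linear algebra in $\mathfrak{sp}(2)$. Your two membership criteria are correct and are precisely Lemmas~\ref{basic lemma GM 1} and~\ref{basic lemma GM 2}: for the difference only the condition $\operatorname{Tr}(Ad_{p}[v_{i},v_{j}])=0$ survives, for the sum only the vanishing of the $(1,1)$--entry of $[v_{i},v_{j}]$, and both branches really are needed (the first in case (I-a), the second in case (I-b) of the paper). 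Nevertheless there are two genuine problems.

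First, your reduction to ``one point per chart of $S^{4}$'' rests on a residual $G/\Delta\cong\text{Sp}(1)$--action on $\Sigma^{7}_{GM}$, and no such action exists: $\Delta=\{(\lambda,\lambda)\}$ is \emph{not} normal in $G=\text{Sp}(1)\times\text{Sp}(1)$ (its normalizer is $\{(\mu,\pm\mu)\}$, so the symmetry of each fibre of $\Sigma^{7}_{GM}\to S^{4}$ that preserves $\mathcal{H}^{\Sigma}$ is only a $\mathbb{Z}/2$). Hence the bracket--generating property is not automatically constant along those $3$--dimensional fibres, and verifying it only at the image of a local section of $\rho:\text{Sp}(2)\to S^{4}$ leaves a $3$--parameter family of points of $\Sigma^{7}_{GM}$ unchecked over every point of $S^{4}$. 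The normalization that \emph{is} available is within a single $\Delta$--fibre: since $v=xw^{-1}$ transforms by conjugation $v\mapsto\lambda v\overline{\lambda}$, one may assume $v=v_{0}+v_{1}{\bf i}$; the resulting case distinction on $v$ happens to be a $G$--orbit invariant, but the spanning argument must still be run for all admissible $p$, as the paper does in Propositions~\ref{base I-a}--\ref{base II}. Second --- and this is where the actual content of the theorem lies --- the case-by-case verification is announced but not performed. You correctly locate the difficulty (the choice of the four fields ``according to the point'', the degenerate loci, the need for first brackets only), but without producing an explicit basis of $Ad_{p}(\mathfrak{h}_{p})$ (or of $\mathfrak{h}_{p}$ via your alternative parametrization by $\beta$), the commutators, the quaternionic coefficient $\alpha(v)$ of \eqref{alpha} that forces the combinations $U_{\bf j},U_{\bf k}$ into the admissible (trace--zero) branch, and the nondegeneracy computation \eqref{reason for classification} that dictates the splitting into cases (I-a), (I-b), (I-r), (II), the argument remains a strategy rather than a proof.
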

The proof of Theorem \ref{main theorem} will be given in the following section. 
\section{Proof of the main Theorem}
Let $q\in \Sigma^{7}_{GM}$ and
$p=\begin{pmatrix}x&y\\w&z\end{pmatrix}\in \pi_{GM}^{-1}(q)\subset
\text{Sp}(2)$. 
The group action of $\Delta$ allows us to choose $p$ with the
property that 
$xw^{-1}=:v, \;  (w\not=0)$ such that $v$ is of the form $v=v_{0}+v_{1}{\bf i}$. Then $p$ is rewritten as 
\begin{equation*}
p= 
\begin{pmatrix}
vw & y \\
w & -\overline{v}y
\end{pmatrix}\in \textup{Sp}(2) \hspace{2ex} \mbox{\it and } \hspace{2ex} |w|=|y|= \frac{1}{\sqrt{|v|^2+1}}.
\end{equation*}
Let ${\bf \rho} \in \{{\bf i}, {\bf j}, {\bf k}\}$ and take $A=(\rho, \rho) \in \mathfrak{sp}(1) \times \mathfrak{sp}(1)$. As before we denote by $\widetilde{A}$ the fundamental vector field of $A$. According to 
(\ref{First_definition_Fundamental_Vectorfields}) we obtain:  
\begin{align}
\ell_{\rho}:=
\textup{Ad}_p \Big{(} \widetilde{A}_{Id} \Big{)} 
=
\begin{pmatrix}
\rho-x \rho \overline{x} & -x \rho \overline{w} \\
-w \rho \overline{x} & \rho-w\rho \overline{w}
\end{pmatrix}. 
\label{defn_ell_rho}
\end{align}
We divide the proof into $4$ cases according to the possible values of $v$:
\begin{align*}
&\text{Case (I)}\,:x\not=0~\text{and}~w\not=0.~\text{We divide into {three} sub-cases},\\
&\qquad\qquad\text{Case (I-a)}: v_{1}\not=0~\text{and}~ v^2\not=-1,\\ 
&\qquad\qquad\text{Case (I-b)}: v^2=-1,\\
&\qquad\qquad\text{Case (I-r)}:\text{$v$ ~is real},\\
&\text{Case (II)}: x=0~\text{or}~w=0.
\end{align*}
\par 
The reason for dividing into these {$4$ cases} will be apparent from
formula (\ref{reason for classification}) below. 
The cases (I-r) and (II) can
be treated in a way similar to the {case of the standard 7 sphere.} We remark that case (I-a) is generic, that is the points in $\Sigma^{7}_{GM}$
having a fiber with such a property is open dense. So we give the proof for this case in full details.

\subsection{Lemmas from Gromoll-Meyer}

{We recall some basic {properties} from \cite{GM}. Let $q\in \Sigma^{7}_{GM}$ and let $X,Y$ be two 
local vector fields on $\Sigma^{7}_{GM}$ 
around a point $q$ taking values in
$\mathcal{H}^{\Sigma}$. 
By $\tilde{X}$
and $\tilde{Y}$ we denote their horizontal lifts to $P=\text{Sp}(2)$. Then we may assume that $\tilde{X},~\tilde{Y}\in \Gamma(H^{G})$, i.e.,
\begin{align*}
&\theta^{\lambda\cdot Id}(\tilde{X})=0,\hspace{2ex} \theta^{\mu^{+}}(\tilde{X})=0,\hspace{3ex}  \forall \; \lambda,
\mu\in\mathfrak{sp}(1),
\end{align*}
and the same for~$\tilde{Y}$. 

We fix a point $p\in$ Sp(2), and put $(dL_{p})_{{Id}}(u)=\tilde{X}_{p}$ and $(dL_{p})_{{Id}}(v)=\tilde{Y}_{p}$ {with $u,v \in \mathfrak{sp}(2)$} 
that is, ${\tilde{u}}_{p}=\tilde{X}_{p}$ and ${\tilde{v}}_{p}=\tilde{Y}_{p}$.  
Then
\begin{align*}
d\theta^{\lambda\cdot Id}(\tilde{X}_{p},\tilde{Y}_{p})
=
&=\tilde{X}_{p}(\theta^{\lambda\cdot Id}(\tilde{Y}))-
\tilde{Y}_{p}(\theta^{\lambda\cdot Id}(\tilde{X}))-
\theta^{\lambda\cdot Id}([\tilde{X},\tilde{Y}])(p)\\
&=-\theta^{\lambda\cdot Id}([\tilde{X},\tilde{Y}])(p),
\end{align*}
since $\theta^{\lambda\cdot Id}(\tilde{Y}))\equiv 0$ and
$\theta^{\lambda\cdot Id}(\tilde{X}))\equiv 0$.  
On the other hand
\begin{align*}
&d\theta^{\lambda\cdot Id}(\tilde{X}_{p},\tilde{Y}_{p})
= d\theta^{\lambda\cdot Id}({\tilde{u}}_{p},{\tilde{v}}_{p})\\
&= {\tilde{u}}_{p}(\theta^{\lambda\cdot Id}({\tilde{v}})) - 
\tilde{v}_{p}(\theta^{\lambda\cdot Id}({\tilde{u}})) - 
\theta^{\lambda\cdot Id}([{\tilde{u}},{\tilde{v}}])(p)\\
&= {\tilde{u}}_{p}(<\lambda\cdot Id,\,Ad_{*}(v)>)
-{\tilde{v}}_{p}(<\lambda\cdot Id,\,Ad_{*}(u)>) 
- \theta^{\lambda\cdot Id}(Ad_{p}([u,\,v]))\\
&= <\lambda\cdot Id,\,Ad_{p}([u,\,v])>
-<\lambda\cdot Id,\,Ad_{p}([v,\,u])>
-<\lambda\cdot Id,\,Ad_{p}([u,\,v])>\\
&= <\lambda\cdot Id,\,Ad_{p}([u,\,v])>.
\end{align*}
Hence, if we put $(dL_{p})_{Id}(Z)=[\tilde{X},\,\tilde{Y}]_{p}$, then
\begin{lem}{\em (\cite{GM})}\label{basic lemma GM 1}
For all $\lambda = - \overline{\lambda} \in \mathbb{H}$: 
\[
\theta^{\lambda\cdot Id}([\tilde{X},\,\tilde{Y}])(p)+
\theta^{\lambda\cdot Id}([{\tilde{u}},\,{\tilde{v}}])(p)
=<\lambda\cdot{Id},\, Ad_{p}(Z+[u,v])>=0. 
\]
That is, 
let $X$ and $Y$ be any local vector fields on $\Sigma^{7}_{GM}$ taking values in
$\mathcal{H}^{\Sigma}$ with horizontal lifts $\tilde{X}$ and $\tilde{Y}$ around 
$p\in \textup{Sp}(2)$. We may find $u$ and $v$ in $\mathfrak{sp}(2)$ such that
$$(dL_{p})_{Id}(u)=\tilde{u}_{p}=\tilde{X}_{p}$$
and $(dL_{p})_{Id}(v)=\tilde{v}_{p}=\tilde{Y}_{p}$.
Let $Z\in\mathfrak{sp}(2)$ such that
$(dL_{p})_{Id}(Z)=[\tilde{X},\,\tilde{Y}]_{p}$. Then above
calculations
says
\begin{equation}\label{basic relation 1}
\text{{\em Tr}}(Ad_{p}(Z+[u,\,v]))=0.
\end{equation}
\end{lem}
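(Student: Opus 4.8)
The plan is to recover the identity $\operatorname{Tr}(Ad_{p}(Z+[u,v]))=0$ by evaluating the two-form $d\theta^{\lambda\cdot Id}$ on the pair $(\tilde X_{p},\tilde Y_{p})$ in two different ways and comparing. I use two facts already available: because $\tilde X,\tilde Y$ are horizontal lifts lying in $\Gamma(H^{G})$ one has $\theta^{\lambda\cdot Id}(\tilde X)\equiv\theta^{\lambda\cdot Id}(\tilde Y)\equiv 0$ near $p$; and, from (\ref{First_definition_Fundamental_Vectorfields}) together with the computation of $\theta^{A}(\tilde u)$ carried out above, a left-invariant field $\tilde u$ satisfies $\theta^{\lambda\cdot Id}(\tilde u)(q)=\langle\lambda\cdot Id,\,Ad_{q}(u)\rangle$ as a function of $q\in\textup{Sp}(2)$, while $[\tilde u,\tilde v]=\widetilde{[u,v]}$ since the bracket of left-invariant fields is left-invariant.

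First, Cartan's formula $d\omega(U,V)=U(\omega(V))-V(\omega(U))-\omega([U,V])$ applied to $\omega=\theta^{\lambda\cdot Id}$, $U=\tilde X$, $V=\tilde Y$ has its first two terms vanishing identically near $p$, so
\[
d\theta^{\lambda\cdot Id}(\tilde X_{p},\tilde Y_{p})=-\,\theta^{\lambda\cdot Id}([\tilde X,\tilde Y])(p)=-\langle\lambda\cdot Id,\,Ad_{p}(Z)\rangle ,
\]
the last equality because $[\tilde X,\tilde Y]_{p}=(dL_{p})_{Id}(Z)=\tilde Z_{p}$. Second, since $d\theta^{\lambda\cdot Id}$ is a $2$-form its value at $p$ depends only on the vectors $\tilde X_{p}=\tilde u_{p}$ and $\tilde Y_{p}=\tilde v_{p}$, so it may be recomputed with the left-invariant fields $\tilde u,\tilde v$: the third Cartan term contributes $-\langle\lambda\cdot Id,\,Ad_{p}([u,v])\rangle$, while the two derivative terms contribute $\langle\lambda\cdot Id,\,Ad_{p}([u,v])\rangle$ and $-\langle\lambda\cdot Id,\,Ad_{p}([v,u])\rangle$ (obtained by differentiating $Ad_{p\,e^{tu}}(v)$ at $t=0$ and using $Ad$-invariance of the inner product), so altogether
\[
d\theta^{\lambda\cdot Id}(\tilde X_{p},\tilde Y_{p})=\langle\lambda\cdot Id,\,Ad_{p}([u,v])\rangle .
\]
Comparing the two evaluations gives $\langle\lambda\cdot Id,\,Ad_{p}(Z+[u,v])\rangle=0$ for every purely imaginary $\lambda$, and rewriting $\theta^{\lambda\cdot Id}([\tilde X,\tilde Y])(p)=\langle\lambda\cdot Id,\,Ad_{p}(Z)\rangle$ and $\theta^{\lambda\cdot Id}([\tilde u,\tilde v])(p)=\langle\lambda\cdot Id,\,Ad_{p}([u,v])\rangle$ via the same $Ad$-formula produces exactly the displayed chain of equalities.

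To pass from this to (\ref{basic relation 1}) I would note that for any $2\times2$ quaternionic matrix $M$ a direct check gives $\langle\lambda\cdot Id,\,M\rangle=\operatorname{Re}\!\big(\lambda\,\overline{\operatorname{Tr}M}\big)$. Since $Ad_{p}$ preserves $\mathfrak{sp}(2)$ and elements of $\mathfrak{sp}(2)$ have purely imaginary diagonal entries, $w:=\operatorname{Tr}(Ad_{p}(Z+[u,v]))$ is a purely imaginary quaternion; taking $\lambda=w$ in the vanishing inner product gives $|w|^{2}=0$, hence $\operatorname{Tr}(Ad_{p}(Z+[u,v]))=0$.

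I do not expect a genuine obstacle here: the content is bookkeeping with the adjoint representation and the quaternionic inner product on $\mathfrak{sp}(2)$. The one step that must be handled with care is the comparison of the two evaluations of $d\theta^{\lambda\cdot Id}$ — the fields $\tilde X,\tilde Y$ are horizontal (hence annihilated by $\theta^{\lambda\cdot Id}$) but not left-invariant, whereas $\tilde u,\tilde v$ are left-invariant but not horizontal; the two computations are allowed to meet precisely because $d\theta^{\lambda\cdot Id}$ is pointwise bilinear and the two pairs of fields agree at $p$.
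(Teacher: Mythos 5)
Your argument is the same as the paper's: evaluate $d\theta^{\lambda\cdot Id}$ at $(\tilde{X}_{p},\tilde{Y}_{p})$ twice via the Cartan formula, once with the horizontal lifts (whose $\theta^{\lambda\cdot Id}$-pairings vanish identically) and once with the left-invariant fields $\tilde{u},\tilde{v}$ agreeing with them at $p$, using $\theta^{\lambda\cdot Id}(\tilde{u})(q)=\langle\lambda\cdot Id,\,Ad_{q}(u)\rangle$, and then compare to get $\langle\lambda\cdot Id,\,Ad_{p}(Z+[u,v])\rangle=0$. Your closing step (writing $\langle\lambda\cdot Id,M\rangle=\operatorname{Re}(\lambda\,\overline{\operatorname{Tr}M})$ and taking $\lambda$ equal to the purely imaginary trace) correctly spells out the passage to $\operatorname{Tr}(Ad_{p}(Z+[u,v]))=0$, which the paper leaves implicit.
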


Likewise, with the same notations as in the previous lemma we have: 
\begin{lem}{\em (\cite{GM})}\label{basic lemma GM 2}
{For all $\lambda = - \overline{\lambda} \in \mathbb{H}$: }
\begin{equation}\label{basic relation 2}
\theta^{\lambda^{+}}([\tilde{X},\,\tilde{Y}])(p)-\theta^{\lambda^{+}}([{\tilde{u}},\,{\tilde{v}}])(p)
=<\lambda^{+},\,Z-[u,v]>=0.
\end{equation}
\end{lem}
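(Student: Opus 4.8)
The plan is to reproduce, essentially verbatim, the Cartan-formula computation that established Lemma~\ref{basic lemma GM 1}, with the one-form $\theta^{\lambda^{+}}$ now playing the role that $\theta^{\lambda\cdot Id}$ played there. Concretely, I would evaluate the $2$-form $d\theta^{\lambda^{+}}$ on the single pair of tangent vectors $(\tilde{X}_{p},\tilde{Y}_{p})=(\tilde{u}_{p},\tilde{v}_{p})$ in two different ways — once through the horizontal lifts $\tilde{X},\tilde{Y}\in\Gamma(H^{G})$ and once through the left-invariant fields $\tilde{u},\tilde{v}$ — and then compare, exactly as was done above for $\theta^{\lambda\cdot Id}$.

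First I would expand $d\theta^{\lambda^{+}}(\tilde{X}_{p},\tilde{Y}_{p})$ using $\tilde{X},\tilde{Y}$: since these take values in $H^{G}=\bigcap_{\lambda}\ker\theta^{\lambda\cdot Id}\cap\bigcap_{\mu}\ker\theta^{\mu^{+}}$, the functions $\theta^{\lambda^{+}}(\tilde{X})$ and $\theta^{\lambda^{+}}(\tilde{Y})$ vanish identically and the Cartan formula collapses to $d\theta^{\lambda^{+}}(\tilde{X}_{p},\tilde{Y}_{p})=-\theta^{\lambda^{+}}([\tilde{X},\tilde{Y}])(p)$. Next I would expand the same $2$-form through $\tilde{u},\tilde{v}$. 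This is the one place where the computation genuinely departs from the $\theta^{\lambda\cdot Id}$ case: by (\ref{First_definition_Fundamental_Vectorfields}) one has $\theta^{\lambda^{+}}(\tilde{w})(p)=-<\lambda^{+},w>$ for every fixed $w\in\mathfrak{sp}(2)$ and every $p$, with no $Ad_{p}$ appearing, so $\theta^{\lambda^{+}}(\tilde{u})$ and $\theta^{\lambda^{+}}(\tilde{v})$ are \emph{constant} functions on $\textup{Sp}(2)$; their derivatives along $\tilde{u},\tilde{v}$ vanish, and the Cartan formula gives $d\theta^{\lambda^{+}}(\tilde{u}_{p},\tilde{v}_{p})=-\theta^{\lambda^{+}}([\tilde{u},\tilde{v}])(p)$. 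In other words, the extra bracket term that showed up in the proof of Lemma~\ref{basic lemma GM 1} (arising from the $Ad_{p}$-dependence of $\theta^{\lambda\cdot Id}(\tilde{u})$) is simply absent here, because the $\mathfrak{sp}(1)$-factor attached to $\lambda^{+}$ acts by left multiplication in the action $E$.

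Comparing the two expressions — legitimate because $d\theta^{\lambda^{+}}$ is tensorial and $\tilde{X}_{p}=\tilde{u}_{p}$, $\tilde{Y}_{p}=\tilde{v}_{p}$ — I would conclude $\theta^{\lambda^{+}}([\tilde{X},\tilde{Y}])(p)=\theta^{\lambda^{+}}([\tilde{u},\tilde{v}])(p)$, which is exactly the vanishing asserted in (\ref{basic relation 2}). Finally I would put this into the stated algebraic form by substituting $[\tilde{X},\tilde{Y}]_{p}=(dL_{p})_{Id}(Z)$ and $[\tilde{u},\tilde{v}]=\widetilde{[u,v]}$ and applying $\theta^{\lambda^{+}}(\tilde{w})(p)=-<\lambda^{+},w>$ once more. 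I do not expect any real obstacle: the whole thing is a routine variant of Lemma~\ref{basic lemma GM 1}. The only point needing a moment's care is recognizing \emph{why} $\theta^{\lambda^{+}}(\tilde{u})$ is constant along $\textup{Sp}(2)$ while $\theta^{\lambda\cdot Id}(\tilde{u})$ is not — a direct reflection of the left/right asymmetry of the two $\textup{Sp}(1)$-factors of $G$ in the action $E$.
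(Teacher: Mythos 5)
Your proof is correct and is exactly the argument the paper intends: the paper carries out the Cartan-formula comparison in detail only for Lemma \ref{basic lemma GM 1} and disposes of Lemma \ref{basic lemma GM 2} with a ``likewise,'' and you have correctly isolated the one substantive change, namely that $\theta^{\lambda^{+}}(\tilde{u})(p)=-\langle\lambda^{+},u\rangle$ is constant in $p$ (no $Ad_{p}$), which kills the derivative terms in the Cartan formula and accounts for the minus sign in (\ref{basic relation 2}) versus the plus sign in (\ref{basic relation 1}). One cosmetic slip: the $\mathfrak{sp}(1)$-factor attached to $\lambda^{+}$ acts by \emph{right} multiplication in $E$, which is precisely why its fundamental vector field is the left-invariant field $-(dL_{p})_{Id}(\lambda^{+})$ and pairs constantly with $\tilde{u}$ under the left-invariant metric --- but this does not affect your argument.
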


\subsection{Adjoint action and  a characterization of the sub-bundle}

We fix a point $p=\begin{pmatrix}x&y\\w&z\end{pmatrix}\in$ Sp(2)
and put 
\[
\mathfrak{h}_{p}:=(dL_{p^{-1}})_{p}(H^{G}_{p}).
\]
Then
\begin{align*}
&\mathfrak{h}_{p}
=\left\{~u=\begin{pmatrix}0&\beta\\-\overline{\beta}&\gamma\end{pmatrix}~\Bigr|~
x\beta\overline{y}-y\overline{\beta}\overline{x}+
w\beta\overline{z}-z\overline{\beta}\overline{w}
+y{\gamma}\overline{y}+z{\gamma}\overline{z}=0~
\right\}.
\end{align*}
Conversely, let 
\[
u=\begin{pmatrix}a&b\\-\overline{b}&-a\end{pmatrix}\in\mathfrak{sp}(2)
\] 
and $Ad_{p^{-1}}(u)$ be of the form
$\begin{pmatrix}0&\beta\\-\overline{\beta}&\gamma\end{pmatrix}$
then
\begin{equation}\label{basic condition}
\overline{x}\,{a}x-\overline{w}\overline{b}x+\overline{x}bw-\overline{w}aw=0.
\end{equation}
Hence
\begin{equation}\label{equation_formula_for_Ad_p_h_p}
Ad_{p}(\mathfrak{h}_{p})=
\left\{\begin{pmatrix}a&b\\-\overline{b}&\hspace{-0.2cm}-a\end{pmatrix}~\Bigr|~
\overline{x}\,{a}x-\overline{w}\overline{b}x+\overline{x}bw-\overline{w}aw=0
\right\}.
\end{equation}
Let $p=\begin{pmatrix}x&y\\w&z\end{pmatrix}\in \text{Sp(2)}$.
We solve the equation (\ref{basic condition})
by considering the {above} two cases separately and fix a basis of  
the space $Ad_{p}(\mathfrak{h}_{p})$:
\vspace{1ex}\\
{\it  Case {\em (I)} :} Assume that $x\cdot w\not=0$. Equation (\ref{basic condition}) is rewritten as
\begin{align}\label{basic equation}
&\overline{v}\cdot a\cdot v~-~ a=\overline{b}\cdot v-\overline{v}\cdot b,
\end{align}
where $v=xw^{-1}$ and
we have assumed that $v=v_{0}+v_1{\bf i}\not=0$. We present the solutions of the equation (\ref{basic equation}).
\vspace{1ex}\par 
Put $\theta_{a}:=\overline{v}\cdot a\cdot v~-~ a$. Then
the solutions are given as follows:
\begin{itemize}
\item[\textup{($S_1$)}] Clealry, the pair $(a,b)=(0,v)$ is a solution.
\item[\textup{($S_2$)}] Define $\displaystyle{b_{a}=-\frac{v\theta_{a}}{2|v|^2}=\frac{v\cdot a-|v|^2a\cdot v}{2|v|^2}}$, then
\[
{\overline{b}_a\cdot v-\overline{v}\cdot b_a}
=\frac{\theta_{a}\cdot |v|^2}{2|v|^2}+
\frac{|v|^2\cdot\theta_{a}}{2|v|^2}=\theta_{a},  \hspace{4ex} {(a=-\overline{a} \in \mathbb{H}).}
\]
Hence the pair $(a,b)=(a,b_{a})$
is a solution for any $a=-\overline{a}\in\mathbb{H}$.
\end{itemize}
With the solutions in (\textup{$S_1$}) and (\textup{$S_2$}) we define a basis 
$u_{0},\, u_{\bf i},\, u_{\bf j},\, u_{\bf k}$  of $Ad_{p}\bigr(\mathfrak{h}_{p}\bigr)$ as follows: 
\begin{align}
u_{0}(v)=u_{0}&=\begin{pmatrix}0&v\\-\overline{v}&0\end{pmatrix},\label{base 0}\\
u_{\bf i}(v)=u_{\bf i}&=\begin{pmatrix}{\bf i}&b_{\bf i}\\-\overline{b_{\bf i}}&-{\bf i}\end{pmatrix}
=\begin{pmatrix}1&\frac{v(1-|v|^2)}{2|v|^2}\\
\frac{\overline{v}(1-|v|^2)}{2|v|^2}&-1\end{pmatrix}
\begin{pmatrix}{\bf i}&0\\0&{\bf i}\end{pmatrix},\label{base i}\\
u_{\bf j}(v)=u_{\bf j}&=\begin{pmatrix}{\bf j}&b_{\bf j}\\-\overline{b_{\bf j}}&-{\bf j}\end{pmatrix}
=\begin{pmatrix}1&\frac{v-|v|^2\overline{v}}{2|v|^2}\\
\frac{{v}-|v|^2\overline{v}}{2|v|^2}&-1\end{pmatrix}
\begin{pmatrix}{\bf j}&0\\0&{\bf j}\end{pmatrix}
:=S(v)\begin{pmatrix}{\bf j}&0\\0&{\bf j}\end{pmatrix},\label{base j}\\
u_{\bf k}(v)=u_{\bf k}&=\begin{pmatrix}{\bf k}&b_{\bf k}\\-\overline{b_{\bf
      k}}&-{\bf k}\end{pmatrix}
=S(v)\begin{pmatrix}{\bf k}&0\\0&{\bf k}\end{pmatrix}.\label{base k}
\end{align}
\begin{rem}\label{(1,1) component}
By the definition of the space $Ad_{p}(\mathfrak{h}_{p})$
these four matrices have the property that
$\text{Tr}(u_{0})=\text{Tr}(u_{\bf i})=\text{Tr}(u_{\bf j})=\text{Tr}(u_{\bf k})=0$,
and {the $(1,1)$ components} of $Ad_{p^{-1}}(u_{\rho})$ vanish for $\rho \in \{ 0, {\bf i}, {\bf j}, {\bf k} \}$. 
\end{rem}
Next we explicitly calculate the commutators
$[u_0,\,u_{\bf i}]$,
$[u_0,\,u_{\bf j}]$, $[u_0,\,u_{\bf k}]$
$[u_{\bf i},\,u_{\bf j}]$ and $[u_{\bf i},\,u_{\bf k}]$, 
respectively. 
\begin{align}\label{List_of_commutators}
&[u_0,\,u_{\bf i}]=\begin{pmatrix}1-|v|^2&-2v\\-2\overline{v}&|v|^2-1\end{pmatrix}
\begin{pmatrix}{\bf i}&0\\0&{\bf i}\end{pmatrix}\\
&[u_0,\,u_{\bf j}]=
\begin{pmatrix}\frac{v^2(1-\overline{v}^2)}{|v|^2}&-2v_{0}\\-2v_{0}&\overline{v}^2-1)\end{pmatrix}
\begin{pmatrix}{\bf j}&0\\0&{\bf j}\end{pmatrix}
=:M(v)\begin{pmatrix}{\bf j}&0\\0&{\bf j}\end{pmatrix},\notag\\
&[u_0,\,u_{\bf k}]=
\begin{pmatrix}\frac{v^2(1-\overline{v}^2)}{|v|^2}&-2v_{0}\\-2v_{0}&\overline{v}^2-1)\end{pmatrix}
\begin{pmatrix}{\bf k}&0\\0&{\bf k}\end{pmatrix}
=M(v)\begin{pmatrix}{\bf k}&0\\0&{\bf k}\end{pmatrix},\notag\\
&[u_{\bf i},\,u_{\bf j}]
=\begin{pmatrix}2+\frac{v^2(1-|v|^2)(1-\overline{v}^2)}{2|v|^4}&-\frac{v_{1}(1-|v|^2)}{|v|^2}{\bf
  i}\\
-\frac{v_{1}(1-|v|^2)}{|v|^2}{\bf i}&2+\frac{(1-|v|^2)(1-\overline{v}^2)}{2|v|^2}\end{pmatrix}
\begin{pmatrix}{\bf k}&0\\0&{\bf k}\end{pmatrix}
{=:} B(v)\begin{pmatrix}{\bf k}&0\\0&{\bf k}\end{pmatrix},\notag\\
&[u_{\bf i},\,u_{\bf k}]
=-\begin{pmatrix}2+\frac{v^2(1-|v|^2)(1-\overline{v}^2)}{2|v|^4}&-\frac{v_{1}(1-|v|^2)}{|v|^2}{\bf
  i}\\
-\frac{v_{1}(1-|v|^2)}{|v|^2}{\bf i}&2+\frac{(1-|v|^2)(1-\overline{v}^2)}{2|v|^2}\end{pmatrix}
\begin{pmatrix}{\bf j}&0\\0&{\bf j}\end{pmatrix}=-B(v)
\begin{pmatrix}{\bf j}&0\\0&{\bf j}\end{pmatrix}. \notag
\end{align}

\subsection{Various linear bases of $\mathfrak{sp}(2)$}

According to the $4$ cases explained above we choose a basis of  
$Ad_{p}(\mathfrak{h}_{p})+[Ad_{p}(\mathfrak{h}_{p}),\,Ad_{p}(\mathfrak{h}_{p})]$
with the property 
that 
$$\mathfrak{sp}(2)=Ad_{p}\bigr((dL_{p^{-1}})_{p}V^{\Delta}_{p}+\mathfrak{h}_{p}\bigr)
+[Ad_{p}(\mathfrak{h}_{p}),\,Ad_{p}(\mathfrak{h_{p}})].$$
\vspace{1ex}\\
\quad(I-a) {We assume $v_{1}\not =0$, $v^2\not=-1$}. Hence $|v|^2-v^2\not=0$.
{\allowdisplaybreaks 
{Put} 
\begin{align}
\alpha(v)
=&\frac{8|v|^{4}+(1-\overline{v}^2)(1-|v|^2)(v^2+|v|^2)
}{2|v|^2(1-\overline{v}^2)(|v|^2-v^2)}\notag\\
&=\frac{4\overline{v}}{(1-\overline{v}^2)(\overline{v}-v)}
+\frac{(1-|v|^2)(v+\overline{v})}{2|v|^2(\overline{v}-v)},\label{alpha}
\end{align}}
and define matrices $U_{\bf j}$ and $U_{\bf k}$ by
\begin{align}\label{U_k_U_k}
&U_{\bf j}(v)=U_{\bf j}=\alpha(v)\cdot [u_0(v),\,u_{\bf j}(v)]-[u_{\bf i}(v),\,u_{\bf k}(v)]
{=:}T(v)\begin{pmatrix}{\bf j}&0\\0&{\bf j}\end{pmatrix}
~\quad\text{and }\\
&U_{\bf k}(v)=U_{\bf k}=\alpha(v)\cdot [u_0(v),\,u_{\bf k}(v)]+[u_{\bf i}(v),\,u_{\bf j}(v)]
=T(v)\begin{pmatrix}{\bf k}&0\\0&{\bf k}\end{pmatrix}.\notag
\end{align}
The above definition of $\alpha(v)$ ensures that the traces of these two matrices vanish:
\begin{align}\label{trace zero 3}
\text{Tr}(U_{\bf j})=0,\hspace{2ex} \text{and} \hspace{2ex} \text{Tr}(U_{\bf k})=0.
\end{align}
\begin{prop}\label{base I-a}
Under the assumptions that {$xw\not=0$, $(xw^{-1})^2\not=-1$} and with our notation in (\ref{defn_ell_rho}) the $10$
matrices 
\begin{equation*}
\big{\{} u_{0},\,u_{\bf i},\,u_{\bf j},\,u_{\bf k},\,[u_{0}, u_{\bf i}],~U_{\bf j},~U_{\bf k}, \ell_{\bf i}, \ell_{\bf j},\ell_{\bf k} \big{\}}
\end{equation*}
{form}  a linear basis {\em(}over $\mathbb{R}${\em )} of 
the space $\mathfrak{sp}(2)$.
\end{prop}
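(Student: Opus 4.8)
The plan is to verify directly that the $10\times 10$ real matrix whose rows are the coordinate vectors of the listed matrices, expressed in a fixed $\mathbb{R}$-basis of $\mathfrak{sp}(2)$, is invertible. Since $\dim_{\mathbb{R}}\mathfrak{sp}(2)=10$ (three dimensions each for the skew-diagonal entries $a$ and $-a$ which coincide up to sign, plus four for $\beta\in\mathbb{H}$), and we are given exactly $10$ vectors, spanning is equivalent to linear independence. First I would record the block structure that all the relevant matrices share: by Remark~\ref{(1,1) component} and the explicit formulas in~(\ref{base i})--(\ref{base k}), (\ref{List_of_commutators}) and~(\ref{U_k_U_k}), each of $u_{\bf j},u_{\bf k},U_{\bf j},U_{\bf k},[u_0,u_{\bf i}]$ has the shape $(\text{$2\times2$ quaternionic matrix})\cdot\mathrm{diag}(\rho,\rho)$ with $\rho\in\{{\bf j},{\bf k}\}$ or ${\bf i}$, while $u_0$ is purely real off-diagonal and the $\ell_\rho$ from~(\ref{defn_ell_rho}) involve $\rho\in\{{\bf i},{\bf j},{\bf k}\}$ conjugated by $x,w$. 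The key is that the ${\bf j}$- and ${\bf k}$-``columns'' decouple from the ${\bf i}$- and real ``columns''.

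Next I would split the independence check into two decoupled subsystems. The ${\bf j}$/${\bf k}$-subsystem involves $u_{\bf j}(v)=S(v)\,\mathrm{diag}({\bf j},{\bf j})$, $u_{\bf k}(v)=S(v)\,\mathrm{diag}({\bf k},{\bf k})$, $U_{\bf j}(v)=T(v)\,\mathrm{diag}({\bf j},{\bf j})$, $U_{\bf k}(v)=T(v)\,\mathrm{diag}({\bf k},{\bf k})$, together with the ${\bf j},{\bf k}$ parts of $\ell_{\bf j},\ell_{\bf k}$; the ${\bf i}$/real subsystem collects $u_0$, $u_{\bf i}$, $[u_0,u_{\bf i}]$, and the ${\bf i}$-part of $\ell_{\bf i}$ (the real parts of $\ell_{\bf i},\ell_{\bf j},\ell_{\bf k}$ need separate attention — see below). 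For the ${\bf j}/{\bf k}$ block the pair $\{S(v),T(v)\}$ must be linearly independent as $2\times2$ quaternionic matrices with entries in $\mathbb{R}+\mathbb{R}{\bf i}$; here the choice of $\alpha(v)$ in~(\ref{alpha}) enters crucially — it was engineered in~(\ref{trace zero 3}) to kill the trace of $U_{\bf j},U_{\bf k}$, and I expect it is exactly what makes $T(v)$ not a real multiple of $S(v)$ when $v_1\ne0$ and $v^2\ne-1$ (equivalently $|v|^2-v^2\ne0$ and $|v|^2-\overline v^2\ne 0$). The hypothesis $v^2\ne-1$ is precisely what guarantees $\alpha(v)$ is finite.

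Then I would treat the contributions of $\ell_{\bf i},\ell_{\bf j},\ell_{\bf k}$ from~(\ref{defn_ell_rho}). Writing $x=vw$, $w$ with $|w|^2=1/(|v|^2+1)$, the diagonal entries $\rho-x\rho\overline x=\rho-vw\rho\overline w\,\overline v$ and $\rho-w\rho\overline w$ produce, in particular, nonzero \emph{real} parts (e.g. $\mathrm{Re}(\rho-w\rho\overline w)$ need not vanish), and these are the only matrices on the list with nonzero real diagonal part — every $u_\rho$ and $U_\rho$ and $[u_0,u_{\bf i}]$ is traceless with purely imaginary diagonal. So the three $\ell_\rho$ are automatically independent of the other seven modulo the traceless subspace, and one reduces to showing the $3\times3$ ``real-trace'' data carried by $\ell_{\bf i},\ell_{\bf j},\ell_{\bf k}$ is nondegenerate; this should come down to a Gram-type determinant in the entries of $w$ (and $v$), nonvanishing because $|w|\ne0$. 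After quotienting those out, the remaining seven traceless matrices split again along ${\bf i}$/real versus ${\bf j}$/${\bf k}$ as above, and each piece is a small explicit determinant.

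The main obstacle I anticipate is the ${\bf j}/{\bf k}$ subsystem: confirming that $S(v)$ and $T(v)$ — together with the off-diagonal ${\bf j},{\bf k}$ contributions of $\ell_{\bf j},\ell_{\bf k}$ — are genuinely independent requires expanding $T(v)=\alpha(v)M(v)\mp B(v)$ (in the notation of~(\ref{List_of_commutators})) and checking a $4\times4$ (or $6\times6$ once $\ell$-terms are included) real determinant in $v_0,v_1$ does not vanish on the locus $v_1\ne0$, $v^2\ne-1$. This is a finite but delicate rational computation, and the role of the cases (I-b), (I-r), (II) excluded here is exactly that this determinant degenerates there, forcing the alternative bases chosen in those cases. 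Everything else — the ${\bf i}$/real $3\times3$ block and the real-trace $3\times3$ block — is routine linear algebra once the decoupling is set up.
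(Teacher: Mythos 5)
Your overall plan (ten vectors in the ten\--dimensional $\mathfrak{sp}(2)$, prove linear independence, decouple the $\mathbb{R}+\mathbb{R}{\bf i}$ part from the $\mathbb{R}{\bf j}+\mathbb{R}{\bf k}$ part, with the crux being that $S(v)$ and $T(v)$ are independent thanks to the choice of $\alpha(v)$) coincides with the paper's strategy for the seven matrices $u_0,u_{\bf i},u_{\bf j},u_{\bf k},[u_0,u_{\bf i}],U_{\bf j},U_{\bf k}$. But your treatment of $\ell_{\bf i},\ell_{\bf j},\ell_{\bf k}$ contains a genuine gap. You claim the diagonal entries $\rho-x\rho\overline{x}$, $\rho-w\rho\overline{w}$ have nonzero \emph{real} parts and that this alone separates the $\ell_\rho$ from the other seven. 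This is false: every element of $\mathfrak{sp}(2)$ has purely imaginary diagonal entries, and indeed $\mathrm{Re}(w\rho\overline{w})=|w|^2\,\mathrm{Re}(\rho)=0$ for $\rho\in\{{\bf i},{\bf j},{\bf k}\}$. The correct separating invariant is the quaternionic trace $\alpha+\gamma$ (an imaginary quaternion): the other seven matrices are traceless (for $U_{\bf j},U_{\bf k}$ precisely by the choice of $\alpha(v)$, see (\ref{trace zero 3})), so applying the trace to a vanishing linear combination leaves $\mathrm{Tr}(\lambda_1\ell_{\bf i}+\lambda_2\ell_{\bf j}+\lambda_3\ell_{\bf k})=0$, i.e.\ $2\lambda=x\lambda\overline{x}+w\lambda\overline{w}$ with $\lambda=\lambda_1{\bf i}+\lambda_2{\bf j}+\lambda_3{\bf k}$. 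That this forces $\lambda=0$ is not ``a Gram determinant nonvanishing because $|w|\ne0$''; it follows from the norm estimate $2|\lambda|\le(|x|^2+|w|^2)|\lambda|=|\lambda|$, which uses $|x|^2+|w|^2=1$ on $\mathrm{Sp}(2)$. Without this step your elimination of $\lambda_1,\lambda_2,\lambda_3$ does not go through.

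A second, related defect is the way you distribute the ``${\bf i}$-part of $\ell_{\bf i}$'' and the ``${\bf j},{\bf k}$-parts of $\ell_{\bf j},\ell_{\bf k}$'' into the two decoupled subsystems. The decoupling along $\mathbb{R}+\mathbb{R}{\bf i}$ versus $\mathbb{R}{\bf j}+\mathbb{R}{\bf k}$ is legitimate only for the seven matrices built from $v=v_0+v_1{\bf i}$ and $\alpha(v)\in\mathbb{R}+\mathbb{R}{\bf i}$; the $\ell_\rho$ of (\ref{defn_ell_rho}) involve conjugation by the general quaternions $x$ and $w$, so for instance $\ell_{\bf i}$ generically has nonzero ${\bf j}$- and ${\bf k}$-components and does not respect the splitting. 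The order of operations in the paper matters: first kill $\lambda_1,\lambda_2,\lambda_3$ by the trace/norm argument above, and only then split the remaining relation into $c_0u_0+c_1u_{\bf i}+d_1[u_0,u_{\bf i}]=0$ and $c_2u_{\bf j}+c_3u_{\bf k}+d_2U_{\bf j}+d_3U_{\bf k}=0$, the latter reducing over $\mathbb{C}$ to $(c_2+c_3{\bf i})S(v)+(d_2+d_3{\bf i})T(v)=0$ and then to the explicit nonvanishing of $-t_{11}s_{12}+t_{12}$ under $v_1\ne0$, $v^2\ne-1$ (formula (\ref{reason for classification})) --- the computation you correctly anticipate as the delicate point but leave open.
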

\begin{proof}
Assume that
\begin{align*}
\lambda_{1}\ell_{\bf i}+\lambda_{2}\ell_{\bf j}
+\lambda_{3}\ell_{\bf k}
+c_{0}u_{0}+c_{1}u_{\bf i}+c_{2}u_{\bf j}+c_{3}{ u_{\bf k}}
+d_{1}[u_{0},\,u_{\bf i}]
+d_{2}U_{\bf j}
+d_{3}U_{\bf k}=0.
\end{align*}
We show that the coefficients $\lambda_j, c_j,d_j$ vanish.  According to (\ref{trace zero 3}) we know that 
\[
\textup{Tr}~U_{\bf j}=\textup{Tr} ~U_{\bf k}=0,
\]
and {by} the explicit expressions of the matrices 
$u_{0},\,u_{\bf i},\,u_{\bf j},\,u_{\bf k},\,[u_{0},\,u_{\bf i}]$
their traces vanish too. Therefore we have: 
\begin{align*}
&\text{Tr}(\lambda_{1}\ell_{\bf i}+
\lambda_{2}\ell_{\bf j}
+\lambda_{3}\ell_{\bf k}
+c_{0}u_{0}+c_{1}u_{\bf i}+c_{2}u_{\bf j}+c_{3}u_{\bf k}
+d_{1}[u_{0},\,u_{\bf i}]
+d_{2}U_{\bf j}
+d_{3}U_{\bf k})\\
&=
\text{Tr}(\lambda_{1}\ell_{\bf i}+
\lambda_{2}\ell_{\bf j}
+\lambda_{3}\ell_{\bf k})=0.
\end{align*}
Hence
\[
2(\lambda_{1}{\bf i}+\lambda_{2}{\bf j}+\lambda_{3}{\bf k})
=x\cdot(\lambda_{1}{\bf i}+\lambda_{2}{\bf j}+\lambda_{3}{\bf k})\cdot \overline{x}
+w\cdot(\lambda_{1}{\bf i}+\lambda_{2}{\bf j}+\lambda_{3}{\bf k})\cdot \overline{w}
\]
and consequently,
\[
2|\lambda|\leq |\lambda||x|^2+|\lambda| |w|^2=|\lambda|,
\]
which implies that $\lambda_{1}=\lambda_{2}=\lambda_{3}=0$.

Since $v=v_{0}+v_{1}{\bf i}$ and the constant
$\alpha(v)=\alpha_{0}(v)+\alpha_{1}(v){\bf i}$ is of the same form,
the equation
\[c_{0}u_{0}+c_{1}u_{\bf i}+c_{2}u_{\bf j}+c_{3}u_{\bf k}+d_{1}[u_{0},\,u_{\bf i}]
+d_{2}U_{\bf j}+d_{3}U_{\bf k}=0
\]
can be separated into a system of two 
\begin{align}
&c_{0}u_{0}+c_{1}u_{\bf i}+d_{1}[u_{0},\,u_{\bf i}]=0,\label{equation 2}\\
&c_{2}u_{\bf j}+c_{3}u_{\bf k}
+d_{2}U_{\bf j}+d_{3}U_{\bf k}=0.\label{equation 3}
\end{align}
The equation (\ref{equation 2}) is rewritten as
\begin{align*}
&c_{1}+d_{1}(1-|v|^2)=0,\\
&{-c_{0}v{\bf i}}+c_{1}\frac{v(1-|v|^2)}{2|v|^2}-2d_{1}v=0,\\
&{c_{0}\overline{v}{\bf i}}+c_{1}\frac{\overline{v}(1-|v|^2)}{2|v|^2}-2d_{1}\overline{v}=0.
\end{align*}
Hence we have
\[
c_{0}=0,~c_{1}=0~\text{and}~d_{1}=0.
\]
\par 
Equation (\ref{equation 3}) is equivalent to the system
 \begin{align*}
0=c_{2}S(v)\begin{pmatrix}{\bf j}&0\\0&{\bf j}\end{pmatrix}
+c_{3}S(v)\begin{pmatrix}{\bf k}&0\\0&{\bf k}\end{pmatrix}
+d_{2}T(v)\begin{pmatrix}{\bf j}&0\\0&{\bf j}\end{pmatrix}
+d_{3}T(v)\begin{pmatrix}{\bf k}&0\\0&{\bf k}\end{pmatrix}, 
\end{align*}
where 
\begin{align*}
&S(v)=\begin{pmatrix}s_{11}&s_{12}\\s_{21}&s_{22}\end{pmatrix}=
\begin{pmatrix}1&\frac{v-|v|^2\overline{v}}{2|v|^2}\\\frac{{v}-|v|^2\overline{v}}{2|v|^2}&-1\end{pmatrix},
~\text{and we put}~T(v)=\begin{pmatrix}t_{11}&t_{12}\\t_{21}&t_{22}\end{pmatrix}.
\end{align*}
Multiplication from the right by ${\bf j}\cdot  \textup{Id}$ implies that: 
\begin{equation*}
0=(c_{2}+c_{3}{\bf i})S(v)+(d_{2}+d_{3}{\bf i})T(v)=0. 
\end{equation*}
We need to show that the two complex matrices $S(v)$ and $T(v)$
are linearly independent over the complex numbers $\mathbb{C}$.
Here the components $t_{11}$ and $t_{12}$ are explicitly given as
\begin{align*}
t_{11}&=\alpha(v)\cdot \frac{v^2(1-\overline{v}^2)}{|v|^2}+2+
\frac{v^2(1-|v|^2)(1-\overline{v}^2)}{2|v|^4}=
\frac{(1+|v|^2)\cdot v\cdot (1+\overline{v}^2)}{|v|^2(\overline{v}-v)},\\
t_{12}&=-\alpha(v)\cdot(v+\overline{v}) {-} \frac{(1-|v|^2)({v-\overline{v}})}{2|v|^2}
={2} \frac{(1+|v|^2)(1+\overline{v}^2)}{(1-\overline{v}^2)(v-\overline{v})}.
\end{align*}
These formulas are obtained by using the expression (\ref{alpha}) of the
constant $\alpha(v)$.

If there is a constant $\delta=\delta_{0}+\delta_{1}{\bf i}$ such that
\[
\delta\cdot S(v)+T(v)=0,
\]
then $\delta=-t_{11}=t_{22}$. Now we prove that 
\[
-t_{11}\cdot s_{12}+t_{12}\not=0.
\]
In fact, a straightforward calculation shows
\begin{align}
-t_{11}\cdot s_{12}+ t_{12}
&= 
-\frac{(1+|v|^2)\cdot v
\cdot (1+\overline{v}^2)}{|v|^2(\overline{v}-v)}
\cdot\frac{v(1-\overline{v}^2)}{2|v|^2}+{2}
\frac{(1+|v|^2)(1+\overline{v}^2)}{(1-\overline{v}^2)(v-\overline{v})}\notag\\
&= \frac{(1+|v|^2)(1+\overline{v}^2)^3}{2(v-\overline{v})\overline{v}^2(1-\overline{v}^2)} \ne 0, 
\label{reason for classification}
\end{align}
since we assumed that $v-\overline{v}\not=0$ and $v^2\not=-1$. 
\end{proof}
\bigskip

\quad(I-b) In this case we assume $v^{2}=-1$. {That is, there is a point $p=\begin{pmatrix}x&y\\w&z\end{pmatrix}$ on the fiber
of $q\in\Sigma^{7}_{GM}$} such that $v=xw^{-1}=\pm{\bf i}$. 
Then, according to the group action the point 
$$p^{\prime}
=\begin{pmatrix}{\bf j}&0\\0&{\bf j}\end{pmatrix}
\cdot p\cdot \begin{pmatrix}-{\bf j}&0\\0&1\end{pmatrix}$$
also lies in $\pi_{GM}^{-1}(q)$. Hence we may assume that $v=xw^{-1}={\bf i}$. 
Then matrices
$p\in\text{Sp}(2)$
in the fiber $(\pi_{GM})^{-1}(q)$ {have the form} 
\begin{equation}\label{definition_p}
p=\begin{pmatrix}{\bf i}w&y\\w&{\bf i}y\end{pmatrix},
~{^\forall w},\,{^\forall {y}}~\text{with}~|w|^2=|y|^2=\frac{1}{2}.
\end{equation}
The space $Ad_{p}(\mathfrak{h}_{p})$ is spanned by the $4$
matrices 
\begin{align*}
&u_{0}=u_{0}({\bf i})
=\begin{pmatrix}0&{\bf i}\\{\bf i}&0\end{pmatrix},~
u_{\bf i}=u_{\bf i}({\bf i})=\begin{pmatrix}{\bf i}&0\\0&-{\bf i}\end{pmatrix},\\
&u_{\bf j}=u_{\bf j}({\bf i})=\begin{pmatrix}{\bf j}&{\bf k}\\{\bf k}&-{\bf j}\end{pmatrix}, ~
u_{\bf k}=u_{\bf k}({\bf i})=\begin{pmatrix}{\bf k}&-{\bf j}\\-{\bf j}&-{\bf k}\end{pmatrix}.
\end{align*}
Their commutators are given as
\begin{align*}
&[u_{0},\,u_{\bf i}]=2\begin{pmatrix}0&1\\-1&0\end{pmatrix},~
[u_{0},\,u_{\bf j}]=-2\begin{pmatrix}{\bf j}&0\\0&{\bf j}\end{pmatrix},~
[u_{0},\,u_{\bf k}]=-2\begin{pmatrix}{\bf k}&0\\0&{\bf k}\end{pmatrix},\\
&[u_{\bf i},\,u_{\bf j}]=2\begin{pmatrix}{\bf k}&0\\0&{\bf k}\end{pmatrix},~
[u_{\bf i},\,u_{\bf k}]=-2\begin{pmatrix}{\bf j}&0\\0&{\bf j}\end{pmatrix},~
[u_{\bf j},\,u_{\bf k}]=4\begin{pmatrix}{\bf i}&1\\-1&{\bf i}\end{pmatrix},
\end{align*}
and we have 
\begin{align}\label{Calculations_Adjoint}
&Ad_{p^{-1}}([u_{0},\,u_{\bf i}])
=4\begin{pmatrix}-\overline{w}{\bf i}w&0\\0 &\overline{y}{\bf i}y\end{pmatrix},\,\,\,\,
Ad_{p^{-1}}([u_{0},\,u_{\bf j}])
=4\begin{pmatrix}0&\overline{w}{\bf k}y\\
\overline{y}{\bf k}w&0\end{pmatrix},\\
&Ad_{p^{-1}}([u_{0},\,u_{\bf k}])
=-4\begin{pmatrix}0&\overline{w}{\bf j}y\\\overline{y}{\bf j}w&0\end{pmatrix},\,\,\,\,
Ad_{p^{-1}}([u_{\bf j},\,u_{\bf k}])
=16\begin{pmatrix}0&0\\0&\overline{y}{\bf i}y\end{pmatrix}.\notag
\end{align}
{Moreover, note that 
\begin{align*}
&\textup{Ad}_{p^{-1}} (u_0)= 2 \begin{pmatrix} 0 & \overline{w} {\bf i} y \\ \overline{y} {\bf i} w & 0 \end{pmatrix}, \hspace{2ex}
\textup{Ad}_{p^{-1}} (u_{\bf i})= 2 \begin{pmatrix} 0 & \overline{w} y \\ \overline{y} w & 0 \end{pmatrix} \\
&\textup{Ad}_{p^{-1}} (u_{\bf j})= 4 \begin{pmatrix} 0 & 0 \\ 0 & \overline{y} {\bf j}y \end{pmatrix}, \hspace{5ex}
\textup{Ad}_{p^{-1}} (u_{\bf k})= 4 \begin{pmatrix} 0 & 0 \\ 0 & \overline{y}{\bf k} y \end{pmatrix}
\end{align*}
Therefore we obtain for all $\lambda=- \overline{\lambda} \in \mathbb{H}$: }
\begin{align}\label{Relation_theta_lambda_plus}
&\theta^{\lambda^{+}}\bigr(Ad_{p^{-1}}\bigr(u_{0}\bigr)\bigr)=0,
~\theta^{\lambda^{+}}\bigr(Ad_{p^{-1}}\bigr(u_{\bf i}\bigr)\bigr)=0,\\
& \theta^{\lambda^{+}}\bigr(Ad_{p^{-1}}\bigr(u_{\bf j}\bigr)\bigr)=0,
~\theta^{\lambda^{+}}\bigr(Ad_{p^{-1}}\bigr(u_{\bf k}\bigr)\bigr)=0,\notag\\
&\theta^{\lambda^{+}}\bigr(Ad_{p^{-1}}\bigr([u_{0},\,u_{\bf j}]\bigr)\bigr)=0,
~\theta^{\lambda^{+}}\bigr(Ad_{p^{-1}}\bigr([u_{0},\,u_{\bf k}]\bigr)\bigr)=0,\notag \\
& \theta^{\lambda^{+}}\bigr(Ad_{p^{-1}}([u_{\bf j},\,u_{\bf k}])\bigr)=0.\notag
\end{align}
That is the $(1,1)$-component of each among these $7$ matrices is zero. 
Now, put $w=a_0+a_{1}{\bf i}+b_{0}{\bf j}+b_{1}{\bf k}=a+b{\bf j}=-{\bf i}x\in \mathbb{H}$ and consider $\ell_{\rho}$ in (\ref{defn_ell_rho}),
\begin{prop}\label{base I-b}
Let $w=a_0+a_{1}{\bf i}+b_{0}{\bf j}+b_{1}{\bf k}=a+b{\bf j}=-{\bf i}x\in \mathbb{H}$.
In this case $|w|^2=|x|^2=|a|^2+|b|^2=\frac{1}{2}.$ 
Then it follows with the above notation: 
\begin{itemize}
\item[\textup{(i)}] 
If $|a|^2-|b|^2\not=\frac{1}{4}$, then the $10$ matrices 
\begin{align*}
&\ell_{\bf i}
=\begin{pmatrix}{\bf i}+{\bf i}{w}{\bf i}\overline{w}{\bf i}&-{\bf i}{w}{\bf i}\overline{w}
\\{w}{\bf i}\overline{w}{\bf i}&{\bf i}-{w}{\bf i}\overline{w}\end{pmatrix}, \hspace{1ex} 
\ell_{\bf j}
=\begin{pmatrix}{\bf j}+{\bf i}{w}{\bf j}\overline{w}{\bf i}&-{\bf i}{w}{\bf j}\overline{w}
\\{w}{\bf j}\overline{w}{\bf i}&{\bf j}-{w}{\bf j}\overline{w}\end{pmatrix},\\
&\ell_{\bf k}
=\begin{pmatrix}{\bf k}+{\bf i}{w}{\bf k}\overline{w}{\bf i}&-{\bf i}{w}{\bf k}\overline{w}
\\{w}{\bf k}\overline{w}{\bf i}&{\bf k}-{w}{\bf k}\overline{w}\end{pmatrix},\\
&u_{0}=\begin{pmatrix}0&{\bf i}\\{\bf i}&0\end{pmatrix},\,\,
~u_{\bf i}=\begin{pmatrix}{\bf i}&0\\0&-{\bf i}\end{pmatrix},~
~u_{\bf j}=\begin{pmatrix}{\bf j}&{\bf k}\\{\bf k}&-{\bf j}\end{pmatrix}
,~~u_{\bf k}=\begin{pmatrix}{\bf k}&-{\bf j}\\-{\bf j}&-{\bf k}\end{pmatrix},\\
&F_{{\bf i}}:=\begin{pmatrix}0&1\\-1&0\end{pmatrix}=\frac{1}{2}[u_{0},\,u_{\bf i}],
~F_{\bf j}:=\begin{pmatrix}{\bf j}&0\\0&{\bf j}\end{pmatrix}=-\frac{1}{2}[u_{0},\,u_{\bf j}],\\
&F_{\bf k}:=\begin{pmatrix}{\bf k}&0\\0&{\bf k}\end{pmatrix}=-\frac{1}{2}[u_{0},\,u_{\bf k}]
\end{align*}
are linearly independent over $\mathbb{R}$ and span $\mathfrak{sp}(2)$.

\item[\textup{(ii)}] 
If $|a|^2-|b|^2=\frac{1}{4}$, then the $10$ matrices $\ell_{\bf i}$, $\ell_{\bf j}$, $\ell_{\bf k}$, $u_0$, $u_{\bf i}$, 
$u_{\bf j}$, $u_{\bf k}$, $F_{\bf j}$, $F_{\bf k}$ and 
\begin{equation*}
F^{\prime}_{\bf i}:=\begin{pmatrix}{\bf i}&1\\-1&{\bf i}\end{pmatrix}=\frac{1}{4}[u_{\bf j},\,u_{\bf k}]
\end{equation*}
are linearly independent over $\mathbb{R}$ and span $\mathfrak{sp}(2)$.
\end{itemize}
\end{prop}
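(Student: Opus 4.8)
The plan is to show that, in each of the two sub-cases, the ten listed matrices are linearly independent over $\mathbb{R}$; since $\dim_{\mathbb{R}}\mathfrak{sp}(2)=3+3+4=10$ (two diagonal entries in $\operatorname{Im}\mathbb{H}$ and one off-diagonal entry in $\mathbb{H}$), linear independence is the same as spanning, which is the assertion of the proposition, and via Propositions~\ref{step 3} and~\ref{difference} it gives the bracket generating property of $\mathcal{H}^{\Sigma}$ at the points $q$ falling under case~(I-b). The first simplification is that $\operatorname{Ad}_{p^{-1}}$ is a linear automorphism of $\mathfrak{sp}(2)$, so it is enough to prove independence of the $\operatorname{Ad}_{p^{-1}}$-images of the ten matrices. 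For all of them except $\ell_{\bf i},\ell_{\bf j},\ell_{\bf k}$ these images are already computed above (the formulas for $\operatorname{Ad}_{p^{-1}}(u_\rho)$, $\operatorname{Ad}_{p^{-1}}([u_0,u_\rho])$ and $\operatorname{Ad}_{p^{-1}}([u_{\bf j},u_{\bf k}])$; see~(\ref{Calculations_Adjoint}) and~(\ref{Relation_theta_lambda_plus})). For $\ell_\rho$ I would rewrite~(\ref{defn_ell_rho}) as $\ell_\rho=\rho\cdot\operatorname{Id}-\operatorname{Ad}_p(\rho^{+})$, so $\operatorname{Ad}_{p^{-1}}(\ell_\rho)=\operatorname{Ad}_{p^{-1}}(\rho\cdot\operatorname{Id})-\rho^{+}$, and then substitute $x={\bf i}w$, $z={\bf i}y$ and use ${\bf i}\rho{\bf i}=-\rho$ for $\rho={\bf i}$ and ${\bf i}\rho{\bf i}=\rho$ for $\rho\in\{{\bf j},{\bf k}\}$; this expresses all entries of $\operatorname{Ad}_{p^{-1}}(\ell_\rho)$ explicitly in $w$ and $y$.

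The core of the argument is then to take a vanishing real linear combination of the ten matrices, apply $\operatorname{Ad}_{p^{-1}}$, and decompose the identity in $\mathfrak{sp}(2)$ into its $(1,1)$-, $(2,2)$- and off-diagonal parts. Each part has the form $\overline{w}(\,\cdot\,)y=0$ or $\overline{y}(\,\cdot\,)y=0$ with the bracketed quaternion a constant-coefficient combination of $1,{\bf i},{\bf j},{\bf k}$, and since $w\neq0$, $y\neq0$ one cancels the outer factors using $w\overline{w}=y\overline{y}=\tfrac12$ and then reads off the coefficients componentwise. By Remark~\ref{(1,1) component} the $(1,1)$-entries of the $\operatorname{Ad}_{p^{-1}}(u_\rho)$ vanish, and by~(\ref{Relation_theta_lambda_plus}) so do those of $\operatorname{Ad}_{p^{-1}}(F_{\bf j})$, $\operatorname{Ad}_{p^{-1}}(F_{\bf k})$ and $\operatorname{Ad}_{p^{-1}}(F'_{\bf i})=\tfrac14\operatorname{Ad}_{p^{-1}}([u_{\bf j},u_{\bf k}])$. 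Consequently the $(2,2)$-part involves only $\ell_{\bf i},u_{\bf j},u_{\bf k}$ and $F_{\bf i}$ (resp.\ $F'_{\bf i}$) and yields $c_{\bf j}=c_{\bf k}=0$ plus a relation between $\lambda_{\bf i}$ and the coefficient of $F_{\bf i}$ (resp.\ $F'_{\bf i}$); the $(1,1)$-part involves only $\ell_{\bf i},\ell_{\bf j},\ell_{\bf k}$ and, in part~(i) only, $F_{\bf i}$; and once the $\lambda_\rho$ are known to vanish the off-diagonal part involves only $u_0,u_{\bf i},F_{\bf j},F_{\bf k}$ and forces $c_0=c_{\bf i}=d_2=d_3=0$.

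Everything therefore comes down to the $(1,1)$-part. Writing $w=a+b{\bf j}$ with $a,b\in\mathbb{C}=\mathbb{R}+\mathbb{R}{\bf i}$, one computes $\overline{w}{\bf i}w=(|a|^2-|b|^2){\bf i}-2\operatorname{Im}(\overline{a}b){\bf j}+2\operatorname{Re}(\overline{a}b){\bf k}$. Substituting the $(2,2)$-relation into the $(1,1)$-part collapses it to $\lambda_{\bf i}{\bf i}+\lambda_{\bf j}{\bf j}+\lambda_{\bf k}{\bf k}=c\,\lambda_{\bf i}\,\overline{w}{\bf i}w$, with $c=4$ in part~(i) (where $F_{\bf i}$ is used) and $c=2$ in part~(ii) (where $F'_{\bf i}$ is used and never enters the $(1,1)$-part). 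Comparing ${\bf i}$-components gives $\lambda_{\bf i}\bigl(c(|a|^2-|b|^2)-1\bigr)=0$; in part~(i) the factor $4(|a|^2-|b|^2)-1$ is nonzero by hypothesis, and in part~(ii), using $|a|^2+|b|^2=\tfrac12$ and $|a|^2-|b|^2=\tfrac14$, the factor equals $-\tfrac12$. Hence $\lambda_{\bf i}=0$, so $\lambda_{\bf i}{\bf i}+\lambda_{\bf j}{\bf j}+\lambda_{\bf k}{\bf k}=0$ gives $\lambda_{\bf j}=\lambda_{\bf k}=0$, and then the coefficient of $F_{\bf i}$ (resp.\ $F'_{\bf i}$) vanishes by the $(2,2)$-relation; combined with the off-diagonal conclusion all ten coefficients are $0$.

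The individual computations are routine; the one substantive point, which is exactly what dictates the split into (i) and (ii), is that at the degenerate locus $|a|^2-|b|^2=\tfrac14$ the matrix $F_{\bf i}=\tfrac12[u_0,u_{\bf i}]$ is a linear combination of $\ell_{\bf i},\ell_{\bf j},\ell_{\bf k}$, so the ten matrices of part~(i) are dependent there. One then replaces $F_{\bf i}$ by $F'_{\bf i}=\tfrac14[u_{\bf j},u_{\bf k}]$, which is still a bracket of two of the $u_\rho$ (hence admissible for $[\operatorname{Ad}_p(\mathfrak{h}_p),\operatorname{Ad}_p(\mathfrak{h}_p)]$) and has, by~(\ref{Relation_theta_lambda_plus}), a vanishing $(1,1)$-entry after conjugation by $p^{-1}$ — which is precisely what keeps the $(1,1)$-part argument alive (now with the harmless factor $2(|a|^2-|b|^2)-1$).
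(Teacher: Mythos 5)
Your argument is correct, and it reaches the conclusion along a route that is organized differently from the paper's, so a brief comparison is in order. The paper does not conjugate the whole linear combination: it splits the unconjugated identity $U=0$ directly into its $(1,1)$-, $(1,2)$- and $(2,2)$-entries (equations (\ref{equation I-b1})--(\ref{equation I-b3})), and invokes $\operatorname{Ad}_{p^{-1}}$ only once, through the $(1,1)$-entry of $\operatorname{Ad}_{p^{-1}}(U)$ (equation (\ref{(1,1) component =0})), to obtain the auxiliary identities $w\mu\overline{w}=\tfrac{\mu}{4}-\tfrac{{\bf i}\mu{\bf i}}{4}-\tfrac{d_1}{2}{\bf i}$ and $x\mu\overline{x}=w\mu\overline{w}$; from these it deduces $c_1=c_2=c_3=0$, then $c_0=0$, $w\mu\overline{w}=-d_1{\bf i}$, $\mu_i=-d_i$, and finally the same kind of scalar relation in $\mu_1$ and $|a|^2-|b|^2$ that you obtain. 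You instead work entirely in the conjugated frame, which is legitimate (as $\operatorname{Ad}_{p^{-1}}$ is a linear automorphism of $\mathfrak{sp}(2)$) and arguably a bit cleaner, since all the needed images $\operatorname{Ad}_{p^{-1}}(u_{\rho})$, $\operatorname{Ad}_{p^{-1}}([u_0,u_{\rho}])$, $\operatorname{Ad}_{p^{-1}}([u_{\bf j},u_{\bf k}])$ are already listed in (\ref{Calculations_Adjoint}) and (\ref{Relation_theta_lambda_plus}); both proofs hinge on exactly the same structural facts (the vanishing $(1,1)$-entries after conjugation, the special role of $F_{\bf i}$ versus $F'_{\bf i}$, and the quantity $|a|^2-|b|^2$ versus $\tfrac14$), your case-(i) scalar equation coincides with the paper's, and in case (ii) both reduce to a nondegenerate relation forcing $\mu_1=0$. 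Two small remarks: your blanket statement that ``each part has the form $\overline{w}(\cdot)y=0$ or $\overline{y}(\cdot)y=0$'' does not apply verbatim to the $(1,1)$-part, which contains the bare term $-\mu$ coming from $-\rho^{+}$ in $\operatorname{Ad}_{p^{-1}}(\ell_{\rho})$, though your subsequent treatment of that part is the correct one; and the closing aside that on the locus $|a|^2-|b|^2=\tfrac14$ the matrix $F_{\bf i}$ lies in the span of $\ell_{\bf i},\ell_{\bf j},\ell_{\bf k}$ alone is not quite accurate --- the dependency that appears there necessarily involves $F_{\bf j}$ and $F_{\bf k}$ as well, since $\overline{a}b\neq 0$ when $|a|^2=\tfrac38$, $|b|^2=\tfrac18$ --- but this aside is purely motivational and does not affect the validity of your proof.
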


\begin{proof}

\text{(i):} Let $\mu_{i}, c_{i}, d_{i}~\in\mathbb{R}$ and assume
\begin{align}
&U=\mu_{1}\ell_{\bf i}+\mu_{2}\ell_{\bf j}+\mu_{3}\ell_{\bf k}+c_{0}u_{0}
+c_{1}u_{\bf i}+c_{2}u_{\bf j}+c_{3}u_{\bf k}\label{I-b null condition (i)}\\
&\qquad\qquad\qquad\qquad\qquad\qquad\qquad\qquad
+d_{1}F_{\bf i}
+d_{2}F_{\bf j}
+d_{3}F_{\bf k}=0.\notag
\end{align}
{Then we conclude that 
$\theta^{\lambda^{+}}(\,Ad_{p^{-1}}(U)\,)=0$ for any $\lambda=-\overline{\lambda}\in \mathbb{H}$. If we put $\mu =\mu_{1}{\bf i}+\mu_{2}{\bf j}+\mu_{3}{\bf k}$, 
then the identities (\ref{Relation_theta_lambda_plus}) 
together with 
\begin{equation*}
\textup{Ad}_{p^{-1}}(F_{\bf i})=2 \begin{pmatrix} -\overline{w} {\bf i} w & 0 \\ 0 & \overline{y} {\bf i} y \end{pmatrix} 
\end{equation*}
imply that }
\begin{equation}\label{(1,1) component =0}
\overline{x}\mu x+ \overline{w}\mu w-\mu
+d_{1}(-\overline{w}x+\overline{x}w)=-\overline{w}{\bf i}\mu {\bf i}w+\overline{w}\mu w -\mu
-2d_{1}\overline{w}{\bf i}w=0.
\end{equation}
Hence
\begin{equation*}
w\mu\overline{w}=\frac{\mu}{4}-\frac{{\bf i}\mu{\bf i}}{4}-{\frac{d_{1}}{2}}{\bf i}.
\end{equation*}
\par 
From this we have
\begin{align}
&x\mu \overline{x}=-{\bf i}w\mu \overline{w}{\bf i}
=-{\bf i}\left(\frac{\mu}{4}-\frac{{\bf i}\mu{\bf i}}{4}-{\frac{d_{1}}{2}}{\bf i}\right){\bf i}
=\frac{\mu}{4}-\frac{{\bf i}\mu{\bf i}}{4}-{{red} \frac{d_{1}}{2}} {\bf i}
=w\mu\overline{w}\label{identity I-b}.
\end{align}
The equation (\ref{I-b null condition (i)})
is rewritten as
\begin{align}
&\mu-{x}\mu\overline{x}+c_{1}{\bf i}+c_{2}{\bf j}+c_{3}{\bf
  k}+d_{2}{\bf j}+d_{3}{\bf k}=0,\label{equation I-b1}\\
&-x\mu\overline{w}+c_{0}{\bf i}+c_{2}{\bf k}-c_{3}{\bf j}+d_{1}=0,\label{equation I-b2}\\
&\mu -{w}\mu \overline{w}-c_{1}{\bf i}-c_{2}{\bf j}-c_{3}{\bf k}+d_{2}{\bf j}+d_{3}{\bf k}=0.\label{equation I-b3}
\end{align}
Using the identity (\ref{identity I-b}) we obtain from (\ref{equation I-b1}) and (\ref{equation I-b3}): 
\[
c_{1}=c_{2}=c_{3}=0.
\]
Then the  equation    (\ref{equation I-b2}) becomes
\[
w\mu\overline{w}-c_{0}+d_{1}{\bf i}=0,
\]
which implies $c_{0}=0$ and 
\begin{equation}\label{d{1}relation}
w\mu\overline{w}=-d_{1}{\bf i}.
\end{equation}
Then the (\ref{equation I-b3}) 
\[
\mu-w\mu\overline{w}+d_{2}{\bf j}+d_{3}{\bf k}
=\mu+d_{1}{\bf i}+d_{2}{\bf j}+d_{3}{\bf k}=0
\]
gives $\mu_{i}=-d_{i}$ for $i=1,2,3$ and (\ref{d{1}relation}) shows that

\[
w\mu \overline{w}=\mu_{1}{\bf i}
\]
This implies 
\[
\mu_{1}(|a|^2-|b|^2)=\frac{\mu_{1}}{4}.
\]
Hence $\mu_{1}=0$ and the identity $w \mu \overline{w}=0$ implies that $\mu_{2}=\mu_{3}=0$, which shows our assertion.
\vspace{1ex}\\ 
\textup{(ii)}:  As in the case {\text{(i)}} we assume that 
\begin{equation}
\mu_{1}\ell_{\bf i}+\mu_{2}\ell_{\bf j}+\mu_{3}\ell_{\bf k}+c_{0}u_{0}
+c_{1}u_{\bf i}+c_{2}u_{\bf j}+
\label{I-b null condition (ii)}
+c_{3}u_{\bf k}
+d_{1}F^{\prime}_{\bf i}
+d_{2}F_{\bf j}
+d_{3}F_{\bf k}=0.
\end{equation}
{By using the same arguments as before} this equation is separated into a system of three equations:
\begin{align}
&\mu-{x}\mu\overline{x}+c_{1}{\bf i}+c_{2}{\bf j}+c_{3}{\bf
  k}+d_{1}{\bf i}+d_{2}{\bf j}+d_{3}{\bf k}=0,\label{equation I-b1 2}\\
&-x\mu\overline{w}+c_{0}{\bf i}+c_{2}{\bf k}-c_{3}{\bf
  j}+d_{1}=0,\label{equation I-b2 2}\\
&\mu -{w}\mu \overline{w}-c_{1}{\bf i}-c_{2}{\bf j}-c_{3}{\bf k}
+d_{1}{\bf i}+d_{2}{\bf j}+d_{3}{\bf k}=0.\label{equation I-b3 2}
\end{align}
Instead of the relation (\ref{(1,1) component =0}),
we have
\[
\overline{x}\mu x+ \overline{w}\mu w-\mu=
-\overline{w}{\bf i}\mu {\bf i}w+\overline{w}\mu w -\mu=0
\]
since in this case the $(1,1)$ component of $Ad_{p^{-1}}(F^{\prime}_{\bf i})$ vanishes, cf. (\ref{Calculations_Adjoint}).
However, one still has the equality (\ref{identity I-b}): 
\[
x\mu \overline{x}=w\mu\overline{w}.
\]
Hence $c_{1}=c_{2}=c_{3}=0$ and from (\ref{equation I-b2 2}) we see that $c_{0}=0$. Now (\ref{equation I-b3 2}) gives 
\[
\mu -{w}\mu \overline{w}+d_{1}{\bf i}+d_{2}{\bf j}+d_{3}{\bf k}
=\mu +2d_{1}{\bf i}+d_{2}{\bf j}+d_{3}{\bf k}=0
\]
and therefore
\[
\mu_{1}=-2d_{1}.
\]
\par 
Equation (\ref{equation I-b2 2}) 
\[
w\mu \overline{w}=\frac{1}{2}\mu_{1}{\bf i}
\]
implies the equality
\[
2\mu_{1}=\mu_{1}(|a|^2-|b|^2)=\frac{\mu_{1}}{4}.
\]
Hence $\mu_{1}=0$, which also implies $\mu=0$,
since ${|\mu|}=|\mu_{1}|$. We have proved {\text{(ii)}}.
\end{proof}
\noindent
{\it Case  {\em (I-r):}}  If $v_{1}=0$, i.e., when {$v\ne 0$} is real, instead of $U_{\bf j}$ and
$U_{\bf k}$, we choose the matrices
$[u_{0},\,u_{\bf j}]$ and $[u_{0}, \, u_{\bf k}]$. 
\begin{prop}\label{base:v is real}
With the notation in (\ref{defn_ell_rho}) the $10$ matrices 
\begin{align*}
{\Bigl{\{}\ell_{\bf i}, \ell_{\bf j}, \ell_{\bf k}, u_{0},\,u_{\bf i},\,u_{\bf j},\,u_{\bf k},\,[u_{0}, u_{\bf i}],~[u_{0},\,u_{\bf j}], [u_{0}, \,u_{\bf k}]\Bigr\}}
\end{align*}
form a basis of $\mathfrak{sp}(2)$.
\end{prop}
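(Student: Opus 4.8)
\emph{Plan.} The argument follows the same scheme as the proof of Proposition \ref{base I-a}, but is substantially easier because the hypothesis that $v=v_0$ is real (and, being in Case (I), nonzero) makes the relevant relation decouple completely. Since $\dim_{\mathbb R}\mathfrak{sp}(2)=10$ and we exhibit exactly ten matrices, it suffices to prove that they are linearly independent over $\mathbb R$. So I would assume
\begin{align*}
&\lambda_1\ell_{\bf i}+\lambda_2\ell_{\bf j}+\lambda_3\ell_{\bf k}
+c_0u_0+c_1u_{\bf i}+c_2u_{\bf j}+c_3u_{\bf k}\\
&\qquad\qquad\qquad\qquad
+d_1[u_0,u_{\bf i}]+d_2[u_0,u_{\bf j}]+d_3[u_0,u_{\bf k}]=0
\end{align*}
with $\lambda_j,c_j,d_j\in\mathbb R$ and show that all coefficients vanish.

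\emph{Simplifications for real $v$ and Step 1 (the trace argument kills the $\lambda_j$).} Putting $v_1=0$ (so $|v|^2=v^2$) in (\ref{base i})--(\ref{base k}) and in (\ref{List_of_commutators}), the matrices $S(v)$ and $M(v)$ collapse to
\[
S(v)=\begin{pmatrix}1&\tfrac{1-v^2}{2v}\\ \tfrac{1-v^2}{2v}&-1\end{pmatrix}=:R(v),\qquad
M(v)=\begin{pmatrix}1-v^2&-2v\\-2v&v^2-1\end{pmatrix}=:Q(v),
\]
so that $u_{\bf i},u_{\bf j},u_{\bf k}$ are $R(v)$ times the diagonal matrix with entries ${\bf i}$, ${\bf j}$, ${\bf k}$ respectively, the three commutators $[u_0,u_{\bf i}],[u_0,u_{\bf j}],[u_0,u_{\bf k}]$ are the same with $Q(v)$ in place of $R(v)$, and $u_0=\begin{pmatrix}0&v\\-v&0\end{pmatrix}$. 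In particular all eight of these matrices are traceless (for $Q(v)$ this uses $(1-v^2)+(v^2-1)=0$, an identity that fails for non-real $v$, which is why $\alpha(v)$ had to be introduced in Case (I-a)). Taking the quaternionic trace of the relation and using this together with the expression (\ref{defn_ell_rho}) for $\ell_\rho$, one gets, exactly as in Proposition \ref{base I-a}, that for $\Lambda:=\lambda_1{\bf i}+\lambda_2{\bf j}+\lambda_3{\bf k}$
\[
2\Lambda=x\Lambda\overline{x}+w\Lambda\overline{w},
\]
whence $2|\Lambda|\le(|x|^2+|w|^2)|\Lambda|=|\Lambda|$ and therefore $\lambda_1=\lambda_2=\lambda_3=0$.

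\emph{Step 2 (the remaining relation decouples by quaternion type).} With $\lambda_j=0$ the relation involves only matrices whose entries are, respectively, real (for $u_0$), real multiples of ${\bf i}$ (for $u_{\bf i}$ and $[u_0,u_{\bf i}]$), of ${\bf j}$ (for $u_{\bf j}$ and $[u_0,u_{\bf j}]$), and of ${\bf k}$ (for $u_{\bf k}$ and $[u_0,u_{\bf k}]$). Comparing the scalar, ${\bf i}$-, ${\bf j}$- and ${\bf k}$-components of each entry therefore splits it into four independent equations: the scalar part gives $c_0\begin{pmatrix}0&v\\-v&0\end{pmatrix}=0$, hence $c_0=0$ since $v\neq0$; and for each $\rho\in\{{\bf i},{\bf j},{\bf k}\}$ one obtains the $2\times2$ real matrix identity $c_\rho R(v)+d_\rho Q(v)=0$. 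This forces $c_\rho=d_\rho=0$ because $R(v)$ and $Q(v)$ are linearly independent over $\mathbb R$: comparing their first rows,
\[
\det\begin{pmatrix}1&1-v^2\\ \tfrac{1-v^2}{2v}&-2v\end{pmatrix}
=-\frac{4v^2+(1-v^2)^2}{2v}=-\frac{(1+v^2)^2}{2v}\neq0
\]
since $v$ is real and nonzero. Hence all ten coefficients vanish, and by dimension count the ten matrices form a basis of $\mathfrak{sp}(2)$.

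\emph{Expected difficulty.} I do not expect any serious obstacle here; this is precisely why the text groups Case (I-r) with the computation for the standard $7$ sphere. The only steps requiring a little care are the two simplifications $S(v)=R(v)$ and $M(v)=Q(v)$, which make the three blocks indexed by ${\bf i},{\bf j},{\bf k}$ formally identical and hence reduce everything to a single $2\times2$ computation, and the non-vanishing of $-(1+v^2)^2/(2v)$ — the analogue for the real case of the quantity (\ref{reason for classification}) that governs the classification into the four cases, and which visibly can never vanish when $v$ is real and $v\neq0$.
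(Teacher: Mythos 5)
Your proof is correct and follows essentially the same route as the paper: the trace argument from case (I-a) eliminates the coefficients of $\ell_{\bf i},\ell_{\bf j},\ell_{\bf k}$, and the remaining seven matrices are handled by a direct independence check, which the paper merely refers back to the standard case of $\S 3$. Your explicit decoupling by quaternion components and the determinant $-(1+v^2)^2/(2v)\neq 0$ simply supply the details the paper leaves implicit.
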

\begin{proof}
{ 
From the explicit list of commutators (\ref{List_of_commutators}) we observe that  
\begin{equation*}
\textup{Tr}\big{(}[u_0, u_{\rho}] \big{)}=\textup{Tr}\big{(} u_{\rho}\big{)} =0, \hspace{4ex} \rho \in \{ 0,{\bf i}, {\bf j}, {\bf k} \big{\}}. 
\end{equation*}}
So the linear independence of the systems: 
\begin{equation*}
\mathcal{S}_1:=\Big{\{} \ell_{\bf i},\ell_{\bf j},\ell_{\bf k}\Big{\}} \hspace{3ex} \mbox{and} \hspace{3ex}  
\mathcal{S}_2:=\Big{\{}u_{0},\,u_{\bf i},\,u_{\bf j},\,u_{\bf k},\,[u_{0},~u_{\bf i}],\,[u_{0},~u_{\bf j}],\,[u_{0},~u_{\bf k}]\Big{\}}
\end{equation*}
is proved in the same way as in case (I-a). Linear independence of the seven matrices in $\mathcal{S}_2$ follows in the same way as the standard 
case in $\S 3$.
\end{proof}
\noindent
{\it Case {\em(II):}} Now we assume that $x=0$ or $w=0$. {Then (\ref{equation_formula_for_Ad_p_h_p}) shows that}
\begin{equation}\label{Equation_Ad_case_II}
Ad_{p}(\mathfrak{h}_{p})=\left\{\begin{pmatrix}0&b\\-\overline{b}&0\end{pmatrix}~\Bigr|~
b\in\mathbb{H}
\right\}.
\end{equation}
Choose the basis 
$u_{0}=\begin{pmatrix}0&1\\-1&0\end{pmatrix}$,
$u_{\bf i}=\begin{pmatrix}0&{\bf i}\\{\bf i}&0\end{pmatrix}$,
$u_{\bf j}=\begin{pmatrix}0&{\bf j}\\{\bf j}&0\end{pmatrix}$,
$u_{\bf k}=\begin{pmatrix}0&{\bf k}\\{\bf k}&0\end{pmatrix}$ 
of the space (\ref{Equation_Ad_case_II}). 
\begin{prop}\label{base II}
With the notation in (\ref{defn_ell_rho}) the $10$ matrices 
\[
\Big{\{} \ell_{\bf i},~\ell_{\bf j},~\ell_{\bf k},~u_{0},~u_{\bf i},~u_{\bf j},
~u_{\bf k},~[u_{0},~u_{\bf i}],~[u_{0},~u_{\bf j}],
~[u_{0},~u_{\bf k}]\Big{\}}
\]
form a basis of $\mathfrak{sp}(2)$. 
\end{prop}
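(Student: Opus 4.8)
The plan is to exploit the very degenerate structure of the fiber in Case (II). First I would record that, since $p=\begin{pmatrix}x&y\\w&z\end{pmatrix}\in\textup{Sp}(2)$ satisfies $|x|^2+|w|^2=|y|^2+|z|^2=1$ and $\overline{x}y+\overline{w}z=0$, the hypothesis $x=0$ forces $z=0$ and $|w|=|y|=1$, while $w=0$ forces $y=0$ and $|x|=|z|=1$. I would carry out the case $x=0$ in detail; the case $w=0$ is completely parallel, using $\ell_{\rho}=\begin{pmatrix}\rho-x\rho\overline{x}&0\\0&\rho\end{pmatrix}$ with $|x|=1$ in place of the formula below. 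By (\ref{Equation_Ad_case_II}) the four matrices $u_{0},u_{\bf i},u_{\bf j},u_{\bf k}$ span $\textup{Ad}_{p}(\mathfrak{h}_{p})$, and the bracket computation
\[
[u_{0},u_{\bf i}]=\begin{pmatrix}2{\bf i}&0\\0&-2{\bf i}\end{pmatrix},\qquad
[u_{0},u_{\bf j}]=\begin{pmatrix}2{\bf j}&0\\0&-2{\bf j}\end{pmatrix},\qquad
[u_{0},u_{\bf k}]=\begin{pmatrix}2{\bf k}&0\\0&-2{\bf k}\end{pmatrix}
\]
is literally the one already done for the standard sphere in $\S 3$. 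Hence the seven matrices $u_{0},u_{\bf i},u_{\bf j},u_{\bf k},[u_{0},u_{\bf i}],[u_{0},u_{\bf j}],[u_{0},u_{\bf k}]$ are linearly independent and span the subspace
\[
W:=\left\{\begin{pmatrix}c&b\\-\overline{b}&-c\end{pmatrix}~\Bigr|~c=-\overline{c}\in\mathbb{H},\ b\in\mathbb{H}\right\}\subset\mathfrak{sp}(2),
\]
whose complement $W'$ consists of the matrices $\begin{pmatrix}c&0\\0&c\end{pmatrix}$ with $c=-\overline{c}\in\mathbb{H}$; a dimension count $7+3=10$ confirms $\mathfrak{sp}(2)=W\oplus W'$.

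It then remains to show that $\ell_{\bf i},\ell_{\bf j},\ell_{\bf k}$ project onto a basis of $W'$ along $W$. Substituting $x=0$ into (\ref{defn_ell_rho}) gives $\ell_{\rho}=\begin{pmatrix}\rho&0\\0&\rho-w\rho\overline{w}\end{pmatrix}$, and decomposing a diagonal element as
\[
\begin{pmatrix}\alpha&0\\0&\gamma\end{pmatrix}=\frac12\begin{pmatrix}\alpha-\gamma&0\\0&\gamma-\alpha\end{pmatrix}+\frac12\begin{pmatrix}\alpha+\gamma&0\\0&\alpha+\gamma\end{pmatrix}\in W+W'
\]
shows that the $W'$-component of $\ell_{\rho}$ is $\begin{pmatrix}\mu_{\rho}&0\\0&\mu_{\rho}\end{pmatrix}$ with $\mu_{\rho}=\rho-\tfrac12\,w\rho\overline{w}$.

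Finally, I would identify $W'$ with $\operatorname{Im}\mathbb{H}$ and observe that the linear map $C:\operatorname{Im}\mathbb{H}\to\operatorname{Im}\mathbb{H}$, $C(\rho)=w\rho\overline{w}$, is orthogonal because $|w|=1$; hence $\tfrac12 C$ has operator norm $\tfrac12<1$, so $\textup{Id}-\tfrac12 C$ is invertible. Since $\mu_{\rho}=(\textup{Id}-\tfrac12 C)(\rho)$, the images of $\ell_{\bf i},\ell_{\bf j},\ell_{\bf k}$ in $W'$ are linearly independent, hence a basis of $W'$; together with the seven matrices spanning $W$ they form a basis of $\mathfrak{sp}(2)$, proving the proposition. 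I do not expect a genuine obstacle here: as the text already signals, Case (II) is ``similar to the standard $7$ sphere'', and the only points requiring (routine) care are the description of the fiber, the verification of the seven-matrix system exactly as in $\S 3$, and the elementary norm estimate that makes $\textup{Id}-\tfrac12 C$ invertible.
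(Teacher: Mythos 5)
Your proof is correct. Note that the paper states Proposition \ref{base II} without proof, only remarking earlier that case (II) ``can be treated in a way similar to the case of the standard $7$ sphere,'' so you are supplying an argument the authors omit; what you give is exactly the intended one. The seven matrices $u_{0},u_{\bf i},u_{\bf j},u_{\bf k},[u_{0},u_{\bf i}],[u_{0},u_{\bf j}],[u_{0},u_{\bf k}]$ coincide with those of \S 3 and span your $W$, and your handling of $\ell_{\bf i},\ell_{\bf j},\ell_{\bf k}$ via the projection onto $W'$ and the invertibility of $\operatorname{Id}-\tfrac12 C$ is the trace--norm argument of Proposition \ref{base I-a} in disguise (the $W'$-component is $\tfrac12\operatorname{Tr}$, and $\|\tfrac12 C\|=\tfrac12<1$ is precisely the estimate $2|\lambda|\le|\lambda|(|x|^2+|w|^2)$ specialised to $x=0$ or $w=0$), so the two routes are essentially identical.
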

\bigskip

\subsection{Final part of the proof }

We complete the proof of the main Theorem \ref{main theorem}
for the cases (I-a) and (I-b) based on Proposition 
\ref{base I-a} and Proposition \ref{base I-b}, respectively. 
{The remaining cases are proved in the same way via Propositions \ref{base:v is real}, \ref{base II}, 
 and \ref{difference}.}
\vspace{1ex}\\
{\it Case {\em (I-a)}:}\quad Let $q\in \Sigma^{7}_{GM}$ and take a {point 
$p=\begin{pmatrix}x&y\\w&z\end{pmatrix}\in \text{Sp(2)}$
in the fiber} such that 
\begin{equation*}
x\cdot w^{-1}=v_{0}+v_{1}{\bf i}:=v \hspace{3ex} \mbox{and} \hspace{3ex} v_1 \ne 0, \hspace{1ex} {v^2 \ne -1}. 
\end{equation*}
\par
{
Locally around $p$ we define vector fields $\tilde{X}^{\rho}$, 
$\rho\in \{0,{\bf i},{\bf j},{\bf k}\}$ horizontal to the fibration 
$\text{Sp(2)}\to \Sigma^{7}_{GM}$ 
and taking values in ${H}^{G}$  such that 
\begin{equation*}
\big{(}dL_{p}\big{)}_{Id}(Ad_{p^{-1}}(u_{\rho}))=\tilde{X}^{\rho}_{p} \hspace{3ex} \mbox{for} \hspace{3ex} \rho\in \big{\{}0,{\bf i},{\bf j},{\bf k}\big{\}}, 
\end{equation*}}
where $u_{0},~u_{\bf i},~u_{\bf j},~u_{\bf k}$
are the matrices in $Ad_{p}(\mathfrak{h}_{p})$ 
defined in (\ref{base 0}) - (\ref{base k}).
\begin{prop}\label{final prop I-a}
The local vector fields
\[
{X^{\rho}:=(d\pi_{GM})(\tilde{X}^{\rho}) , \hspace{3ex} \rho \in \big{\{}0,{\bf i},{\bf j},{\bf k}\big{\}},}
\]
take values in $\mathcal{H}^{\Sigma}$ and the seven tangent vectors at $q=\pi_{GM}(p)$
\begin{align*}
&X^{0}_{q},\,X^{\bf i}_{q},\,X^{\bf j}_{q},\,X^{\bf k}_{q},\\
&[(d\pi_{GM})(\tilde{X}^{0}),\,(d\pi_{GM})(\tilde{X}^{\bf i})]_{q}
=[X^{0},\,X^{\bf i}]_{q},\\
&[\alpha(v)(d\pi_{GM})_{p}(\tilde{X}^{0}),\,(d\pi_{GM})_{p}(\tilde{X}^{\bf j})]_{q}
-[(d\pi_{GM})_{p}(\tilde{X}^{\bf i}),\,(d\pi_{GM})_{p}(\tilde{X}^{\bf k})]_{q}\\
&=\alpha(v)[X^{0},\,X^{\bf j}]_{q}-[X^{\bf i},\,X^{\bf k}]_{q}\\
&[\alpha(v)(d\pi_{GM})_{p}(\tilde{X}^{0}),\,(d\pi_{GM})_{p}(\tilde{X}^{\bf k})]_{q}
+[(d\pi_{GM})_{p}(\tilde{X}^{\bf i}),\,(d\pi_{GM})_{p}(\tilde{X}^{\bf j})]_{q}\\
&=\alpha(v)[X^{0},\,X^{\bf k}]_{q}+[X^{1},\,X^{\bf j}]_{q}
\end{align*}
span the tangent space $T_{q}(\Sigma^{7}_{GM})$.
\end{prop}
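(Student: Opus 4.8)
The plan is to transport the statement into linear algebra in $\mathfrak{sp}(2)$ and then invoke Proposition~\ref{base I-a}. Since $(d\pi_{GM})_p\colon T_p(\textup{Sp}(2))\to T_q(\Sigma^{7}_{GM})$ is onto with kernel $V^{\Delta}_p$, composing it with the left trivialization $(dL_{p^{-1}})_p$ and with $\textup{Ad}_p$ yields an $\mathbb{R}$-linear isomorphism
\[
\overline{\Psi}\colon T_q(\Sigma^{7}_{GM})\ \xrightarrow{\ \sim\ }\ \mathfrak{sp}(2)\big/\mathfrak{l},\qquad \mathfrak{l}:=\textup{Ad}_p\bigl((dL_{p^{-1}})_p(V^{\Delta}_p)\bigr).
\]
The first thing to check is that $\mathfrak{l}=\operatorname{span}_{\mathbb{R}}\{\ell_{\bf i},\ell_{\bf j},\ell_{\bf k}\}$: from the description of $V^{\Delta}_p$ one gets $(dL_{p^{-1}})_p(V^{\Delta}_p)=\{\textup{Ad}_{p^{-1}}(\lambda\cdot Id)-\lambda^{+}\mid\lambda=-\overline{\lambda}\}$, and $\textup{Ad}_p$ sends $\textup{Ad}_{p^{-1}}(\lambda\cdot Id)-\lambda^{+}$ to $\lambda\cdot Id-\textup{Ad}_p(\lambda^{+})$, which for $\lambda\in\{{\bf i},{\bf j},{\bf k}\}$ is exactly $\ell_{\lambda}$ of \eqref{defn_ell_rho}; since this assignment is injective, the three $\ell_{\lambda}$ span $\mathfrak{l}$.

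Next I would carry the seven vectors through $\overline{\Psi}$. The $\tilde{X}^{\rho}$ are $H^{G}$-valued horizontal lifts for the $\Delta$-bundle with $\tilde{X}^{\rho}_p=(dL_p)_{Id}\bigl(\textup{Ad}_{p^{-1}}(u_{\rho})\bigr)$ (legitimate since $u_{\rho}\in\textup{Ad}_p(\mathfrak{h}_p)$, so $\textup{Ad}_{p^{-1}}(u_{\rho})\in\mathfrak{h}_p$); hence $X^{\rho}$ takes values in $\mathcal{H}^{\Sigma}$ and $\overline{\Psi}(X^{\rho}_q)=u_{\rho}+\mathfrak{l}$. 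For the bracket terms, $d\pi_{GM}(\tilde{X}^{\rho})=X^{\rho}$ gives $[X^{\rho},X^{\sigma}]_q=(d\pi_{GM})_p\bigl([\tilde{X}^{\rho},\tilde{X}^{\sigma}]_p\bigr)$; writing $[\tilde{X}^{\rho},\tilde{X}^{\sigma}]_p=(dL_p)_{Id}(Z_{\rho\sigma})$ and $w_{\rho}=\textup{Ad}_{p^{-1}}(u_{\rho})$ (so that $\textup{Ad}_p[w_{\rho},w_{\sigma}]=[u_{\rho},u_{\sigma}]$), Lemmas~\ref{basic lemma GM 1} and \ref{basic lemma GM 2} give that $\textup{Tr}\bigl(\textup{Ad}_p(Z_{\rho\sigma}+[w_{\rho},w_{\sigma}])\bigr)=0$ and that the $(1,1)$-entry of $Z_{\rho\sigma}-[w_{\rho},w_{\sigma}]$ vanishes. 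Consequently, for any real linear combination of the $[\tilde{X}^{\rho},\tilde{X}^{\sigma}]_p$ whose algebraic counterpart $\sum c_{\rho\sigma}[u_{\rho},u_{\sigma}]$ is trace-free, the corresponding combination of the $Z_{\rho\sigma}$, minus the same combination of the $[w_{\rho},w_{\sigma}]$, lies in $\mathfrak{h}_p$ --- its $(1,1)$-entry vanishes by the second relation, its $\textup{Ad}_p$-trace by the first together with trace-freeness of the counterpart; this is Proposition~\ref{difference}. Applying it to the three combinations $[u_{0},u_{\bf i}]$, $U_{\bf j}=\alpha(v)[u_{0},u_{\bf j}]-[u_{\bf i},u_{\bf k}]$ and $U_{\bf k}=\alpha(v)[u_{0},u_{\bf k}]+[u_{\bf i},u_{\bf j}]$ --- all trace-free, the first by \eqref{List_of_commutators} and the latter two precisely because $\alpha(v)$ was defined in \eqref{alpha} to force \eqref{trace zero 3} --- I conclude that the $\overline{\Psi}$-images of $[X^{0},X^{\bf i}]_q$, of $\alpha(v)[X^{0},X^{\bf j}]_q-[X^{\bf i},X^{\bf k}]_q$, and of $\alpha(v)[X^{0},X^{\bf k}]_q+[X^{\bf i},X^{\bf j}]_q$ equal, respectively, the classes of $[u_{0},u_{\bf i}]$, $U_{\bf j}$, $U_{\bf k}$ modulo $\textup{Ad}_p(\mathfrak{h}_p)+\mathfrak{l}=\operatorname{span}_{\mathbb{R}}\{u_{0},u_{\bf i},u_{\bf j},u_{\bf k}\}+\mathfrak{l}$.

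Finally, the seven $\overline{\Psi}$-images are $u_{0},u_{\bf i},u_{\bf j},u_{\bf k}$, together with $[u_{0},u_{\bf i}],U_{\bf j},U_{\bf k}$ up to corrections lying in $\operatorname{span}_{\mathbb{R}}\{u_{0},u_{\bf i},u_{\bf j},u_{\bf k}\}$; since the first four images already span that corrective subspace, the seven images span modulo $\mathfrak{l}$ the same subspace of $\mathfrak{sp}(2)$ as $\{u_{0},u_{\bf i},u_{\bf j},u_{\bf k},[u_{0},u_{\bf i}],U_{\bf j},U_{\bf k}\}$. By Proposition~\ref{base I-a} these ten matrices --- the seven just listed together with $\ell_{\bf i},\ell_{\bf j},\ell_{\bf k}$ --- form an $\mathbb{R}$-basis of $\mathfrak{sp}(2)$, so the seven images span $\mathfrak{sp}(2)/\mathfrak{l}$; since $\overline{\Psi}$ is an isomorphism, the seven tangent vectors span $T_q(\Sigma^{7}_{GM})$. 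The step I expect to be the genuine obstacle is the bracket identification of the previous paragraph, i.e.\ pinning down the class of $[\tilde{X}^{\rho},\tilde{X}^{\sigma}]_p$ in $\mathfrak{sp}(2)/(\textup{Ad}_p(\mathfrak{h}_p)+\mathfrak{l})$; this is exactly where Lemmas~\ref{basic lemma GM 1}, \ref{basic lemma GM 2} and the trace normalization \eqref{alpha} of $\alpha(v)$ --- which is what lets the argument apply to the combinations $U_{\bf j},U_{\bf k}$ rather than to the individual brackets $[u_{0},u_{\bf j}],[u_{\bf i},u_{\bf k}]$, whose traces do not vanish --- come in.
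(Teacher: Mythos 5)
Your proposal is correct and follows essentially the same route as the paper: it reduces the statement to Proposition~\ref{base I-a} by using Lemmas~\ref{basic lemma GM 1} and \ref{basic lemma GM 2} to show that $Ad_p$ of the $Z$-type combinations differs from $[u_0,u_{\bf i}]$, $U_{\bf j}$, $U_{\bf k}$ only by elements of $Ad_p(\mathfrak{h}_p)=\operatorname{span}\{u_0,u_{\bf i},u_{\bf j},u_{\bf k}\}$, which is exactly the paper's argument. Your extra bookkeeping (the explicit quotient isomorphism onto $\mathfrak{sp}(2)/\operatorname{span}\{\ell_{\bf i},\ell_{\bf j},\ell_{\bf k}\}$, and the remark that the trace normalization of $\alpha(v)$ is what allows the lemmas to apply to the combinations $U_{\bf j}$, $U_{\bf k}$) only makes explicit what the paper leaves implicit.
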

\begin{proof}
We define $Z_{i}\in \mathfrak{sp}(2)$ ($i=1,2,3$) by 
\begin{align*}
&(dL_{p})_{Id}\bigr(Z_{1}\bigr)=[\tilde{X}^{0},\,\tilde{X}^{\bf i}]_{p},\\
&(dL_{p})_{Id}\bigr(Z_{2}\bigr)=
\alpha(v)[\tilde{X}^{0},\,\tilde{X}^{\bf j}]_{p}-[\tilde{X}^{\bf i},\,\tilde{X}^{\bf k}]_{p},\\
&(dL_{p})_{Id}\bigr(Z_{3}\bigr)=
\alpha(v)[\tilde{X}^{0},\,\tilde{X}^{\bf k}]_{p}+[\tilde{X}^{\bf i},\,\tilde{X}^{\bf j}]_{p},
\end{align*}
where $\alpha(v)\in \mathbb{H}$ is given in (\ref{alpha}). 
Then, by Lemma \ref{basic lemma GM 2} 
we have:
\begin{align*}
0=&\text{Tr}\bigr(Ad_{p}(Z_{1})+[u_{0},u_{\bf i}])=
\text{Tr}\bigr(Ad_{p}(Z_{1}))=
\text{Tr}\bigr(Ad_{p}(Z_{1})-[u_{0},u_{\bf i}]),\\
0=&\text{Tr}\bigr(Ad_{p}(Z_{2})
+\alpha(v)[u_{0},u_{\bf j}]-[u_{\bf i},\,u_{\bf k}]\bigr)=\text{Tr}\bigr(Ad_{p}(Z_{2})\bigr)\\
=&\text{Tr}\bigr(Ad_{p}(Z_{2})-
\alpha(v)[u_{0},u_{\bf j}]+[u_{\bf i},\,u_{\bf k}]\bigr),\\
0=&\text{Tr}\bigr(Ad_{p}(Z_{3})
+\alpha(v)[u_{0},u_{\bf k}]+[u_{\bf i},\,u_{\bf j}]\bigr)=\text{Tr}\bigr(Ad_{p}(Z_{3})\bigr)\\
=&\text{Tr}\bigr(Ad_{p}(Z_{3})-
\alpha(v)[u_{0},u_{\bf k}]-[u_{\bf i},\,u_{\bf j}]\bigr).
\end{align*}
Also Lemma \ref{basic lemma GM 1} implies that 
\begin{align*}
&<{\lambda^{+}},\,Z_{1}-Ad_{p^{-1}}\bigr([u_{0},u_{\bf i}]\bigr)>=0,\\
&<{\lambda^+},\, Z_{2}
-Ad_{p^{-1}}\bigr(\alpha(v)[u_{0},u_{\bf j}]-[u_{\bf i},\,u_{\bf k}]\bigr)>=0,\\
&<{\lambda^+},\, Z_{3}-Ad_{p^{-1}}\bigr(\alpha(v)[u_{0},u_{\bf k}]+[u_{\bf i},\,u_{\bf j}]\bigr)>=0,
\end{align*}
for any $\lambda=-\overline{\lambda}\in\mathbb{H}$.
{Therefore, the vectors} 
\begin{align*}
&Ad_{p}(Z_{1})-[u_{0},u_{\bf i}], ~
Ad_{p}(Z_{2})-\alpha(v)[u_{0},u_{\bf j}]+[u_{\bf i},\,u_{\bf k}],\\
&Ad_{p}(Z_{3})-\alpha(v)[u_{0},u_{\bf k}]-[u_{\bf i},\,u_{\bf j}]
\end{align*}
belong to $Ad_{p}\bigr(\mathfrak{h}_{p}\bigr)$. Note that in Proposition \ref{base I-a}
we may replace $[u_{0},\,u_{\bf i}]$, $U_{\bf j}$ and $U_{\bf k}$ by $Ad_{p}(Z_{1})$, $Ad_{p}(Z_{2})$ and $Ad_{p}(Z_3)$, respectively. 
This proves Case (I-a).
\end{proof}
\noindent 
{\it Case {\em (I-b)}:} We treat case (i) of Proposition \ref{base I-b} and remark that  (ii) is proved in the same way as the case (I-a). 
Let $q\in \Sigma^{7}_{GM}$ and fix a point 
$$p=\begin{pmatrix}x&y\\w&z\end{pmatrix}\in \text{Sp}(2)$$
in the fiber} such that $v=x\cdot w^{-1}={\bf i}$.
Locally around $p$ we define vector fields $\tilde{Y}^{\rho}$, $\rho\in \{0,{\bf i},{\bf j},{\bf k}\}$ horizontal to the fibration 
$\text{Sp(2)}\to \Sigma^{7}_{GM}$ 
and taking values in ${H}^{G}$  such that 
\begin{equation*}
\big{(}dL_{p}\big{)}_{Id}\big{(}Ad_{p^{-1}}(u_{\rho}({\bf i}))\big{)}=\tilde{Y}^{\rho}_{p} \hspace{3ex} \mbox{for} \hspace{3ex} \rho\in \big{\{}0,{\bf i},{\bf j},{\bf k}\big{\}}.
\end{equation*}
Here $u_{0}({\bf i})=u_{0}, u_{\bf i}({\bf i})=u_{\bf i},  u_{\bf j}({\bf i})=u_{\bf j}, u_{\bf k}({\bf i})=u_{\bf k}$
are the matrices in $Ad_{p}(\mathfrak{h}_{p})$ defined in (\ref{base 0}) - (\ref{base k}) with $v={\bf i}$. Let 
{$$Y^{\rho}:=(d\pi_{GM})(\tilde{Y}^{\rho}) \hspace{3ex} \mbox{with} \hspace{3ex} \rho\in \big{\{}0,{\bf i},{\bf j},{\bf k}\big{\}} $$}
be the horizontal vector fields around $q=\pi_{GM}(p)$ taking values in $\mathcal{H}^{\Sigma}$. 
 \begin{prop}\label{final prop I-b}
The seven tangent vectors at $q=\pi_{GM}(p)$
\begin{align*}
&Y^{0}_{q},\,Y^{\bf i}_{q},\,Y^{\bf j}_{q},\,Y^{\bf k}_{q},\\
&[(d\pi_{GM})(\tilde{Y}^{0}),\,(d\pi_{GM})(\tilde{Y}^{\bf i})]_{q}
=[Y^{0},\,Y^{\bf i}]_{q},\\
&[(d\pi_{GM})(\tilde{Y}^{0}),\,(d\pi_{GM})(\tilde{Y}^{\bf j})]_{q}=[Y^{0},\,Y^{\bf j}]_{q},\\
&[(d\pi_{GM})(\tilde{Y}^{0}),\,(d\pi_{GM})(\tilde{Y}^{\bf k})]_{q}=
[Y^{0},\,Y^{\bf k}]_{q}
\end{align*}
span the tangent space $T_{q}(\Sigma^{7}_{GM})$.
\end{prop}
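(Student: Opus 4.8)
The plan is to carry out, \emph{mutatis mutandis}, the argument of Proposition~\ref{final prop I-a}, with Proposition~\ref{base I-b}(i) in place of Proposition~\ref{base I-a}. First, each $Y^{\rho}=(d\pi_{GM})(\tilde{Y}^{\rho})$ takes values in $\mathcal{H}^{\Sigma}$ because $\tilde{Y}^{\rho}$ is a horizontal lift taking values in $H^{G}$ and $\mathcal{H}^{\Sigma}=d\pi_{GM}(H^{G})$. The seven tangent vectors in the statement are precisely the images under $(d\pi_{GM})_{p}$ of $\tilde{Y}^{0}_{p},\tilde{Y}^{\bf i}_{p},\tilde{Y}^{\bf j}_{p},\tilde{Y}^{\bf k}_{p}$ and of the three brackets $[\tilde{Y}^{0},\tilde{Y}^{\rho}]_{p}$, $\rho\in\{{\bf i},{\bf j},{\bf k}\}$, using $d\pi_{GM}([\tilde{X},\tilde{Y}])=[d\pi_{GM}(\tilde{X}),d\pi_{GM}(\tilde{Y})]$ for horizontal lifts as in \S 2. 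Since $(d\pi_{GM})_{p}$ is surjective onto $T_{q}(\Sigma^{7}_{GM})$ with kernel $V^{\Delta}_{p}$, it is equivalent to show that these seven vectors together with $V^{\Delta}_{p}$ span $T_{p}(\text{Sp}(2))$.

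I would then transport the question to $\mathfrak{sp}(2)$ via the isomorphism $\Phi:=Ad_{p}\circ(dL_{p^{-1}})_{p}\colon T_{p}(\text{Sp}(2))\to\mathfrak{sp}(2)$. By construction $\Phi(\tilde{Y}^{\rho}_{p})=u_{\rho}({\bf i})=u_{\rho}$; by \eqref{defn_ell_rho}, $\Phi(V^{\Delta}_{p})=\mathrm{span}_{\mathbb{R}}\{\ell_{\bf i},\ell_{\bf j},\ell_{\bf k}\}$; and, defining $Z_{\rho}\in\mathfrak{sp}(2)$ by $(dL_{p})_{Id}(Z_{\rho})=[\tilde{Y}^{0},\tilde{Y}^{\rho}]_{p}$, one has $\Phi([\tilde{Y}^{0},\tilde{Y}^{\rho}]_{p})=Ad_{p}(Z_{\rho})$. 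Thus the claim becomes: $\{u_{0},u_{\bf i},u_{\bf j},u_{\bf k},Ad_{p}(Z_{\bf i}),Ad_{p}(Z_{\bf j}),Ad_{p}(Z_{\bf k}),\ell_{\bf i},\ell_{\bf j},\ell_{\bf k}\}$ spans $\mathfrak{sp}(2)$. By Proposition~\ref{base I-b}(i), the family obtained by replacing $Ad_{p}(Z_{\rho})$ with $F_{\bf i}=\frac{1}{2}[u_{0},u_{\bf i}]$, $F_{\bf j}=-\frac{1}{2}[u_{0},u_{\bf j}]$, $F_{\bf k}=-\frac{1}{2}[u_{0},u_{\bf k}]$ is already a basis, so it suffices to prove that $Ad_{p}(Z_{\rho})-2F_{\rho}\in Ad_{p}(\mathfrak{h}_{p})=\mathrm{span}\{u_{0},u_{\bf i},u_{\bf j},u_{\bf k}\}$ for $\rho\in\{{\bf i},{\bf j},{\bf k}\}$; a block-triangular change of basis then does it.

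These three memberships follow from Lemmas~\ref{basic lemma GM 1} and \ref{basic lemma GM 2} applied with $u=Ad_{p^{-1}}(u_{0})$, $v=Ad_{p^{-1}}(u_{\rho})$, so that $Ad_{p}([u,v])=[u_{0},u_{\rho}]$: the first lemma gives $\textup{Tr}(Ad_{p}(Z_{\rho})+[u_{0},u_{\rho}])=0$ and the second gives that the $(1,1)$-entry of $Ad_{p^{-1}}(Ad_{p}(Z_{\rho})-[u_{0},u_{\rho}])$ vanishes. For $\rho={\bf i}$, where $[u_{0},u_{\bf i}]=2F_{\bf i}$ is trace-free, these two facts directly say that $Ad_{p}(Z_{\bf i})-2F_{\bf i}$ has zero trace and vanishing $(1,1)$-entry after $Ad_{p^{-1}}$, hence lies in $Ad_{p}(\mathfrak{h}_{p})$ by \eqref{equation_formula_for_Ad_p_h_p}. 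For $\rho\in\{{\bf j},{\bf k}\}$ one has $[u_{0},u_{\rho}]=-2F_{\rho}$, hence $\textup{Tr}(Ad_{p}(Z_{\rho}))=2\,\textup{Tr}(F_{\rho})$ and $\textup{Tr}(Ad_{p}(Z_{\rho})-2F_{\rho})=0$; and since \eqref{Relation_theta_lambda_plus} records that the $(1,1)$-entry of $Ad_{p^{-1}}(F_{\rho})$ vanishes when $v={\bf i}$, the $(1,1)$-entry of $Ad_{p^{-1}}(Ad_{p}(Z_{\rho})-2F_{\rho})$ vanishes as well, so again $Ad_{p}(Z_{\rho})-2F_{\rho}\in Ad_{p}(\mathfrak{h}_{p})$ by \eqref{equation_formula_for_Ad_p_h_p}. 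Case (ii) of Proposition~\ref{base I-b} is treated the same way, replacing the bracket $[\tilde{Y}^{0},\tilde{Y}^{\bf i}]$ by $[\tilde{Y}^{\bf j},\tilde{Y}^{\bf k}]$ and $F_{\bf i}$ by $F'_{\bf i}=\frac{1}{4}[u_{\bf j},u_{\bf k}]$, now using the vanishing of the $(1,1)$-entry of $Ad_{p^{-1}}([u_{\bf j},u_{\bf k}])$ from \eqref{Calculations_Adjoint}. The work here is bookkeeping rather than conceptual: one must keep straight the sign $\pm$ in Proposition~\ref{difference} (it is $-$ for $\rho={\bf i}$ and $+$ for $\rho\in\{{\bf j},{\bf k}\}$, precisely because $[u_{0},u_{\bf i}]$ is trace-free whereas $[u_{0},u_{\bf j}],[u_{0},u_{\bf k}]$ are not) and the factor $2$ relating $Ad_{p}(Z_{\rho})$ and $F_{\rho}$; the substantive input, which forces the error terms into $Ad_{p}(\mathfrak{h}_{p})$ rather than merely into a larger subspace, is the special vanishing \eqref{Relation_theta_lambda_plus} available only at points with $v={\bf i}$.
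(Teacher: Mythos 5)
Your proposal is correct and follows essentially the same route as the paper's own proof: you introduce the elements $Z_{\rho}$ (the paper's $W_{i}$), use Lemmas \ref{basic lemma GM 1} and \ref{basic lemma GM 2} together with the computations (\ref{Calculations_Adjoint})/(\ref{Relation_theta_lambda_plus}) to place $Ad_{p}(Z_{\rho})\mp[u_{0},u_{\rho}]$ in $Ad_{p}(\mathfrak{h}_{p})$, and then conclude by substituting these for $F_{\bf i},F_{\bf j},F_{\bf k}$ in the basis of Proposition \ref{base I-b}(i). Your explicit reduction via $\ker(d\pi_{GM})_{p}=V^{\Delta}_{p}$ and the map $Ad_{p}\circ(dL_{p^{-1}})_{p}$ only spells out what the paper leaves implicit in its closing ``analogously to case (I-a)'' remark.
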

\begin{proof}
We define $W_{i}\in \mathfrak{sp}(2)$ ($i=1,2,3$) through the relations 
\begin{align*}
&(dL_{p})_{Id}\bigr(W_{1}\bigr)=[\tilde{Y}^{0},\,\tilde{Y}^{\bf i}]_{p},\\
&(dL_{p})_{Id}\bigr(W_{2}\bigr)=[\tilde{Y}^{0},\,\tilde{Y}^{\bf j}]_{p},\\
&(dL_{p})_{Id}\bigr(W_{3}\bigr)=[\tilde{Y}^{0},\,\tilde{Y}^{\bf k}]_{p},
\end{align*}
\par 
By the Lemmas \ref{basic lemma GM 2} and \ref{basic lemma GM 1} together with the explicit expressions of the commutators 
$$[u_{0}({\bf i}),u_{\bf i}({\bf i})]=[u_{0},\,u_{\bf}], \hspace{1ex}
[u_{0}({\bf i}),u_{\bf j}({\bf i})]=[u_{0},\,u_{\bf k}], \hspace{1ex}\mbox{and} \hspace{1ex} 
[u_{0}({\bf i}),u_{\bf k}({\bf i})]=[u_{0},\,u_{\bf k}] $$
{in (\ref{List_of_commutators}) and the identities in (\ref{Calculations_Adjoint}) we obtain: }
 \begin{align*}
 0&=\text{Tr}\big{(}Ad_{p}(W_{1})+[u_{0},u_{\bf i}]\big{)}\\
 &=\text{Tr}\big{(}Ad_{p}(W_{1})\big{)} =\text{Tr}\big{(}Ad_{p}(W_{1})-[u_{0},u_{\bf i}]\big{)}, \\
0&=<\lambda^{+}, W_{1}-Ad_{p^{-1}}([u_{0},\,u_{\bf i}])>,\\
0&=\text{Tr}\big{(}Ad_{p}(W_{2})+[u_{0},u_{\bf j}]\big{)}
=\text{Tr}\big{(}Ad_{p}(W_{2})-(-[u_{0},u_{\bf j}])\big{)},\\
0&=<\lambda^{+}, W_{2}-Ad_{p^{-1}}([u_{0},u_{\bf j}])> =<\lambda^{+}, W_{2}>\\
& =<\lambda^{+}, W_{2}-(Ad_{p^{-1}}(-[u_{0},u_{\bf j}]))>,\\
0&=\text{Tr}\big{(}Ad_{p}(W_{3})+[u_{0},u_{\bf k}]\big{)} =
\text{Tr}\big{(}Ad_{p}(W_{3})-(-[u_{0},u_{\bf k}])\big{)},\\
0&=<\lambda^{+}, W_{3}-Ad_{p^{-1}}([u_{0},u_{\bf k}])>=<\lambda^{+}, W_{3}>\\
& =<\lambda^{+}, W_{3}-\bigr(Ad_{p^{-1}}(-[u_{0},u_{\bf k}])\bigr)>.
\end{align*}

Hence we obtain
\[
Ad_{p}(W_{1})-[u_{0},\,u_{\bf i}],
Ad_{p}(W_{2})+[u_{0},\,u_{\bf j}],
Ad_{p}(W_{3})+[u_{0},\,u_{\bf k}]\in Ad_{p}(\mathfrak{h_{p}}),
\]
Analogously to the last case  (I-a) this implies that $\{Y^{\rho}\: | \: \rho=0,{\bf i}, {\bf j}, {\bf k}\}$ together with \[
(d\pi_{GM})_{p}\circ(dL_{p})_{Id}(W_{i})=(d\pi_{GM})_{p}([\tilde{Y}^{0},\,\tilde{Y}^{\rho}])
=[Y^{0},\,Y^{\rho}]_{q}, \rho \in \big{\{}0,{\bf i}, {\bf j}, {\bf k} \big{\}}
\]
span the tangent space $T_{q}(\Sigma^{7}_{GM})$. 
\end{proof}
It would be interesting to decide whether some or all of the remaining 26 exotic 7 spheres admit a higher co-dimensional sub-Riemannian structure. 
According to \cite{DPR,KZ,T} the Gromoll-Meyer exotic sphere  $\Sigma_{GM}^7$ is the only exotic sphere that is modeled by a biquotient of a compact group. 
If an exotic sphere is realized as a base space of a principal bundle in which the total space is not a group the method of the present paper are not applicable and 
new strategies are required for attacking this question. 
\providecommand{\bysame}{\leavevmode\hbox to3em{\hrulefill}\thinspace}

\end{document}